\newtheorem{theorem}{Theorem}
\newtheorem{example}[theorem]{Example}
\newtheorem{proposition}[theorem]{Proposition}
\newtheorem{corollary}[theorem]{Corollary}
\newcommand{\be}{\mathbf e}
\newcommand{\bu}{\mathbf u}
\newcommand{\bv}{\mathbf v}
\newcommand{\bxbar}{\overline\bx}
\newcommand{\bx}{\mathbf x}
\newcommand{\bX}{\mathbf X}
\newcommand{\bmu}{\boldsymbol{\mu}}
\newcommand{\calk}{\mathcal K}
\newcommand{\calo}{\mathcal O}
\newcommand{\calu}{\mathcal U}
\newcommand{\calv}{\mathcal V}
\newcommand{\bsize}{{\sf m}}
\newcommand{\kmin}{{\sf m_1}}
\newcommand{\kmax}{{\sf m_2}}
\newcommand{\tr}{\mathop{{\rm tr}}}
\newcommand{\R}{\mathbb R}
\newcommand{\C}{\mathbb C}
\newcommand{\wh}{\widehat}
\newcommand{\wt}{\widetilde}
\newcommand{\diag}{{\rm diag}}
\newcommand{\rank}{{\rm rank}}
\DeclareMathOperator*{\argmin}{\arg\!\min}
\newcommand{\expec}{\mathbb E}
\newcommand{\var}{\text{Var}}
\newcommand{\ph}{\phantom}
\newcommand{\smtxa}[2]{
{\mbox{\scriptsize
$\left[\!\! \begin{array}{#1} #2 \end{array} \!\! \right]$}}}
\newcommand{\trks}{{\sf TR KSchur}}
\newcommand{\trsub}{{\sf TR subspace}}
\newcommand{\fdasub}{{\sf FDA subspace}}
\newcommand{\Vopt}{V^\ast}
\newcommand{\rhopt}{\rho^\ast}
\title{A subspace method for large-scale trace ratio problems}
\journal{-} 
\begin{document}

\begin{frontmatter} 

\author[tue]{Giulia Ferrandi}
\author[tue]{Michiel E.~Hochstenbach}
\author[IST]{M.~Ros\'ario Oliveira}

\address[tue]{Department of Mathematics and Computer Science, TU Eindhoven, PO Box 513, 5600MB Eindhoven, The Netherlands, {\tt \{g.ferrandi, m.e.hochstenbach\}@tue.nl}}
\address[IST]{CEMAT and Department of Mathematics, Instituto Superior T\'ecnico, Universidade de Lisboa, Portugal, 
{\tt rosario.oliveira@tecnico.ulisboa.pt}}

\begin{abstract}
A subspace method is introduced to solve large-scale trace ratio problems. This approach is matrix-free, requiring only the action of the two matrices involved in the trace ratio. At each iteration, a smaller trace ratio problem is addressed in the search subspace. Additionally, the algorithm is endowed with a restarting strategy, that ensures the monotonicity of the trace ratio value throughout the iterations. The behavior of the approximate solution is investigated from a theoretical viewpoint, extending existing results on Ritz values and vectors, as the angle between the search subspace and the exact solution approaches zero. Numerical experiments in multigroup classification show that this new subspace method tends to be more efficient than iterative approaches relying on (partial) eigenvalue decompositions at each step.
\end{abstract}

\begin{keyword}
Trace ratio, 
subspace method, 
Davidson's method, 
linear dimensionality reduction, 
Fisher's discriminant analysis, 
multigroup classification. 
\end{keyword}

\end{frontmatter} 

\section{Introduction}
Let $A$ and $B$ be $p\times p$ matrices, where $A$ is symmetric and $B$ is symmetric positive definite (SPD).
Let $V$ be $p \times k$, with orthonormal columns, and $I_k$ be the $k \times k$ identity matrix.
We develop and investigate a subspace method for the trace ratio optimization problem (TR, cf.~\cite{ngo2012trace})
\begin{equation} \label{eq:TR.MaxProblem}
\max_{V^TV = I_k} \rho(V) := \frac{\tr(V^T\!AV)}{\tr(V^T\!BV)},
\end{equation}
(a \emph{ratio of traces}) where $\tr(\cdot)$ indicates the trace of a matrix. The TR problem has often been studied as an alternative method to Fisher's discriminant analysis (FDA; see, e.g., \cite[Ch.~11]{johnson2007applied}) for dimensionality reduction and classification of multigroup data (cf.~Section~\ref{sec:classification}). For FDA, the problem of interest is the maximization of the {\em trace of a ratio}, which has the equivalent forms (cf.~\cite[Prop.~2.1.1]{absil2009optimization}):
\begin{equation}\label{eq:gep}
\max_{V^T\!BV = I_k} \tr(V^T\!AV) = \max_{\rank(V) = k} \tr((V^T\!BV)^{-1} \, V^T\!AV),
\end{equation}
where $\rank(\cdot)$ indicates the rank of a matrix. The maximizer of this problem is given by the $k$ generalized eigenvectors of the pair $(A,B)$, corresponding to the $k$ largest eigenvalues (see, e.g., \cite{ngo2012trace}). Therefore, finding the solution to FDA is equivalent to solving the generalized eigenvalue problem (GEP) $A\bx = \lambda B\bx$, for its $k$ largest eigenvalues. 

In general, TR does not have a closed-form solution. A typical Newton type iterative method to solve \eqref{eq:TR.MaxProblem} consists of solving a sequence of eigenvalue problems, where, at each iteration, the $k$ leading eigenvectors are required. In a context where $p$ is large, subspace methods are preferable to (dense) methods solving all eigenpairs. In \cite{zhang2010fast, ngo2012trace}, an implicitly restarted Lanczos method is employed for this task. Regarding the GEP (and also FDA), there exist several methods to retrieve the $k$ leading generalized eigenvectors from $(A,B)$, where $A$ is symmetric and $B$ is SPD; for instance, Davidson type methods (see, e.g., \cite{crouzeix1994davidson} or \cite[Sec.~8.3]{saad2011numerical}), LOBPCG \cite{knyazev2001toward}, or Jacobi--Davidson approaches for hard cases with clustered eigenvalues \cite{sleijpen2000jacobi}.

Inspired by Davidson type methods, we propose a subspace method for solving the TR problem. The idea is to produce a suitable search subspace by solving a sequence of trace ratio problems after projecting $A$ and $B$. A residual matrix is then used to enrich the search subspace, and a suitable restarting technique is exploited to lower both the computational cost and the memory requirements. This method is especially convenient if $A$ and $B$ are sparse, or the matrix-vector multiplications by $A$ and $B$ are cheap to compute. In the context of multigroup classification, both $A$ and $B$ can be factored conveniently (see, e.g., \cite{zhang2010fast}), making TR and FDA ideal candidates for subspace methods. 

The paper is organized as follows. Section~\ref{sec:tr-overview} gives an overview of the trace ratio problem, its properties, and the commonly used iterative method to find its solution. Section~\ref{sec:tr-davidson} describes the three phases of a new Davidson type method to solve TR: extraction, expansion, and restart. We show that the iterations of our algorithm generate a non-decreasing sequence of trace ratio values.
Section~\ref{sec:trsub-analysis} discusses the behavior of the approximate solution generated by the subspace method, and the related approximate eigenvalues, under the hypothesis that the angle between the search subspace and the exact solution approaches zero. In Section~\ref{sec:classification}, TR and FDA are reviewed in the context of multigroup classification. A Davidson's method for solving FDA is proposed in Section~\ref{sec:trsub-gep}.
This is already known in the context of subspace methods for eigenvalue problems, but we provide some further insight into the restarting mechanism. The two TR algorithms and Davidson's method for FDA are applied to multigroup classification in Section~\ref{sec:trsub-exp}, and tested on both synthetic and real datasets. Conclusions are drawn in Section~\ref{sec:trsub-concl}.

The eigenvalues of a symmetric matrix $A$ are denoted by $\lambda_1(A) \ge \cdots \ge \lambda_p(A)$.
Similarly, $\lambda_1(A,B) \ge \cdots \ge \lambda_p(A,B)$ are the eigenvalues of the pair $(A,B)$, where $A$ is symmetric and $B$ is SPD. The Frobenius norm of a matrix is indicated by $\|\cdot\|_F$; the spectral norm of a matrix is $\|\cdot\|$; $\|\bx\|_{A}^2 = \bx^T\!A\bx$ is used for the $A$-weighted norm associated with an SPD matrix $A$. 

\section{Overview of the trace ratio problem}\label{sec:tr-overview}
This section reviews some properties of the trace ratio problem and reports the well-known iterative method to find its solution. The trace ratio problem is well defined, under the appropriate hypotheses on $(A,B)$, since it admits a finite solution \cite[Prop.~3.2]{ngo2012trace}:
\begin{proposition}\label{prop:rank}
Let $A$ be symmetric and $B$ be symmetric positive semidefinite, with $\rank(B) \ge p-k+1$. Then \eqref{eq:TR.MaxProblem} admits a finite maximum.
\end{proposition}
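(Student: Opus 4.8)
The plan is to combine the compactness of the feasible set with a dimension count ensuring the denominator cannot vanish. Let $\mathcal{S} = \{V \in \R^{p\times k} : V^TV = I_k\}$ denote the feasible set (a Stiefel manifold); as a closed and bounded subset of $\R^{p\times k}$ it is compact, and it is nonempty since $k \le p$. The maps $V \mapsto \tr(V^T\!AV)$ and $V \mapsto \tr(V^T\!BV)$ are polynomials in the entries of $V$, hence continuous on $\mathcal{S}$. Thus, once we show that the denominator is bounded away from zero on $\mathcal{S}$, the function $\rho$ is continuous on the nonempty compact set $\mathcal{S}$ and therefore attains a finite maximum there.

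The heart of the argument is to prove $\tr(V^T\!BV) > 0$ for every $V \in \mathcal{S}$. Since $B$ is symmetric positive semidefinite, so is $V^T\!BV$, and a positive semidefinite matrix has zero trace only if it is the zero matrix; hence $\tr(V^T\!BV) = 0$ would imply $V^T\!BV = 0$. Factoring $B = B^{1/2}B^{1/2}$ with $B^{1/2}$ symmetric positive semidefinite, this yields $\|B^{1/2}V\|_F^2 = \tr(V^T\!BV) = 0$, so $B^{1/2}V = 0$ and therefore $BV = 0$: every column of $V$ lies in $\ker(B)$. But the columns of $V$ are orthonormal, hence linearly independent, which would force $\dim\ker(B) \ge k$. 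The hypothesis $\rank(B) \ge p-k+1$ gives $\dim\ker(B) = p - \rank(B) \le k-1$, a contradiction. Hence $\tr(V^T\!BV) > 0$ on all of $\mathcal{S}$, and by continuity and compactness $\delta := \min_{V \in \mathcal{S}} \tr(V^T\!BV) > 0$.

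Putting the pieces together: on $\mathcal{S}$ the numerator $\tr(V^T\!AV)$ is bounded (a continuous function on a compact set) and the denominator is at least $\delta > 0$, so $\rho$ is a bounded, continuous function on the nonempty compact set $\mathcal{S}$; it therefore attains a finite maximum, which is precisely the assertion of Proposition~\ref{prop:rank}.

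I do not expect a genuine obstacle here; the only points demanding care are the implication ``$\tr(V^T\!BV) = 0 \Rightarrow BV = 0$'', which really uses the positive semidefiniteness of $B$ (via its square root), and the observation that it is the orthonormality of the columns of $V$—not merely the shape $p\times k$—that makes the rank hypothesis bite. As an alternative to the compactness estimate for $\delta$, one can make it explicit through Courant--Fischer / Ky Fan: $\min_{V^TV=I_k}\tr(V^T\!BV) = \sum_{i=p-k+1}^{p}\lambda_i(B) \ge \lambda_{p-k+1}(B) > 0$, where the final inequality is exactly the rank condition (at least $p-k+1$ eigenvalues of $B$ are strictly positive).
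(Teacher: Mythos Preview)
Your argument is correct. The paper, however, does not actually prove this proposition: it merely states it and attributes it to \cite[Prop.~3.2]{ngo2012trace}. So there is no in-paper proof to compare against; your compactness-plus-dimension-count argument (and the Ky~Fan alternative you mention) is precisely the standard route and would be an appropriate self-contained proof.
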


Let $\rhopt$ be the global maximum of \eqref{eq:TR.MaxProblem}. Then, a necessary condition for $\Vopt$ to be a corresponding maximizer is (cf. \cite[Eq.~(4.1)]{ngo2012trace})
\begin{equation}\label{eq:tr-linearization}
\max_{V^TV=I_k}\tr(V^T(A-\rho^\ast B)\,V) = \tr((\Vopt)^T(A-\rho^\ast B)\,\Vopt) = 0.
\end{equation}
From the Courant--Fischer characterization (see, e.g., \cite{ngo2012trace}), $\Vopt$ is an orthonormal basis for the eigenspace of $A-\rho^\ast B$, associated with its $k$ largest eigenvalues.

A direct consequence of \eqref{eq:tr-linearization} is formulated in \cite[Thm.~2]{shen2010geometric}, which states that every local maximizer is also a global one. Moreover, \cite[Thm.~3.4]{zhang2010fast} shows that the condition \eqref{eq:tr-linearization} is also sufficient. The result is reported in the following proposition.
\begin{proposition}\label{prop:char}
Let $\rho^\ast$ be the global maximum of \eqref{eq:TR.MaxProblem}. Then any $\Vopt$ is a global maximizer for \eqref{eq:TR.MaxProblem} if and only if it is an orthonormal basis for the eigenspace of the matrix $A - \rho^\ast B$, corresponding to its $k$ largest eigenvalues. In addition, $\tr((\Vopt)^T(A - \rho^\ast B)\Vopt) = 0$.
\end{proposition}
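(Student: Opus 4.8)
The plan is to reduce the trace ratio optimization to a single Ky Fan (Courant--Fischer) eigenvalue problem for the \emph{fixed} symmetric matrix $A-\rho^\ast B$, treating $\rho^\ast$ as a known scalar. The algebraic heart of the argument is the identity, valid for every $V$ with $V^TV = I_k$,
\[
\tr\bigl(V^T(A-\rho^\ast B)\,V\bigr) \;=\; \tr(V^TBV)\,\bigl(\rho(V)-\rho^\ast\bigr),
\]
which is just the definition $\rho(V)=\tr(V^TAV)/\tr(V^TBV)$ rearranged. Because $B$ is SPD (or, more generally, positive semidefinite with $\rank(B)\ge p-k+1$ as in Proposition~\ref{prop:rank}) and $V$ has $k$ orthonormal columns, $V^TBV$ is SPD, so $\tr(V^TBV)>0$. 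Since $\rho^\ast$ is the global maximum of $\rho$, the right-hand side is $\le 0$ for every feasible $V$, and it vanishes exactly when $\rho(V)=\rho^\ast$, i.e.\ exactly when $V$ is a global maximizer of \eqref{eq:TR.MaxProblem}.

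First I would record that this already yields the ``In addition'' clause: any global maximizer $\Vopt$ has $\rho(\Vopt)=\rho^\ast$, hence $\tr((\Vopt)^T(A-\rho^\ast B)\Vopt)=0$. Combined with the linearization identity \eqref{eq:tr-linearization}, this common value $0$ is precisely $\max_{V^TV=I_k}\tr(V^T(A-\rho^\ast B)V)$; in other words $0$ is the maximum of the linearized objective, a fact I will need in the next step.

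Next I would invoke the Courant--Fischer / Ky Fan characterization already used in the excerpt: for a symmetric matrix $M$, $\max_{V^TV=I_k}\tr(V^TMV)$ equals the sum of the $k$ largest eigenvalues of $M$, and the maximizers are exactly the orthonormal bases of an eigenspace of $M$ corresponding to its $k$ largest eigenvalues. Applying this to $M=A-\rho^\ast B$ and using that its maximum equals $0$ (previous paragraph), a feasible $V$ attains $\tr(V^T(A-\rho^\ast B)V)=0$ if and only if it is such an eigenbasis. Chaining the two equivalences,
\[
V \text{ is a global maximizer of }\rho
\iff \tr\bigl(V^T(A-\rho^\ast B)V\bigr)=0
\iff V \text{ is an orthonormal eigenbasis of } A-\rho^\ast B \text{ for its } k \text{ largest eigenvalues},
\]
gives the asserted characterization. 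Non-vacuousness (existence of a maximizer, so that $\Vopt$ makes sense) follows from Proposition~\ref{prop:rank} together with continuity of $\rho$ on the compact Stiefel manifold $\{V:V^TV=I_k\}$.

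The step needing the most care is the maximizer part of the Ky Fan characterization when there is a repeated eigenvalue at position $k$, i.e.\ $\lambda_k(A-\rho^\ast B)=\lambda_{k+1}(A-\rho^\ast B)$: then ``the eigenspace corresponding to the $k$ largest eigenvalues'' must be read as any $k$-dimensional subspace that contains every eigenvector with eigenvalue strictly larger than $\lambda_k$ and is contained in the span of all eigenvectors with eigenvalue $\ge\lambda_k$. I would state this reading explicitly and note that every such subspace realizes the maximum of $\tr(V^TMV)$, so the equivalence above is unaffected; everything else is the short computation in the first paragraph.
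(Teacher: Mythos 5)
Your proposal is correct and takes essentially the same route the paper relies on: the paper does not prove Proposition~\ref{prop:char} itself but obtains necessity from the linearization identity \eqref{eq:tr-linearization} combined with the Courant--Fischer (Ky Fan) characterization, deferring sufficiency to \cite[Thm.~3.4]{zhang2010fast}, and your rearrangement $\tr(V^T(A-\rho^\ast B)V)=\tr(V^TBV)\,(\rho(V)-\rho^\ast)$ with $\tr(V^TBV)>0$ is exactly the mechanism behind both directions. Your explicit treatment of the degenerate case $\lambda_k(A-\rho^\ast B)=\lambda_{k+1}(A-\rho^\ast B)$ is sound and matches what the paper only illustrates informally in the example that follows the proposition.
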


Uniqueness (up to orthonormal transformations) is also discussed in, e.g., \cite{shen2010geometric,zhang2010fast,ngo2012trace}. From \cite[Thm.~3.4]{zhang2010fast}, the solution to \eqref{eq:TR.MaxProblem} is spanned by the $k$ leading eigenvectors of $A - \rho^\ast B$. This subspace is unique if there is a gap between the $k$th and the $(k+1)$st largest eigenvalues, i.e., $\lambda_k(A-\rho^\ast B) > \lambda_{k+1}(A-\rho^\ast B)$. In this situation, the solution is uniquely described by the set $\{VQ\, : \, Q \in \mathbb{R}^{k\times k},\, Q^T Q = I_k\}$. When there is no gap, the uniqueness of $\rho^\ast$ still holds, as a consequence of Proposition~\ref{prop:char}. Let us consider a small example.
\begin{example}
Let $A = \diag(3,2,1)$, $B = \diag(1,4,3)$, $k=2$. One may check that the maximum trace ratio is $\rhopt = 1$, with $A -\rhopt B = \diag(2,-2,-2)$. The maximizing subspace is spanned by the first canonical basis vector $\be_1$, together with any linear combination of $\be_2$ and $\be_3$.
\end{example}

Starting from \eqref{eq:tr-linearization}, the trace ratio problem \eqref{eq:TR.MaxProblem} can be recast as a root-finding problem for $f(\rho) = \max_{V^TV=I_k}\tr(V^T(A-\rho B)V)$ \cite{ngo2012trace}.
A common iterative method to find the zero of $f$ is given in Algorithm~\ref{algo:trsub-traceratio}.
The method may be viewed as member of the family of \emph{self-consistent field} methods; see, e.g., \cite{zhang2015maximization}, where a generalization of the trace ratio function is studied.
It also turns out to be a Newton type method, as pointed out in \cite{ngo2012trace}.

\begin{algorithm}
\caption{Iterative method to solve the trace ratio problem \eqref{eq:TR.MaxProblem}
}
\label{algo:trsub-traceratio}
{\bf Input:} Symmetric $A \in \R^{p \times p}$, SPD $B \in \R^{p \times p}$, dimension $k < p$; (optional) initial trace ratio value $\rho_0$, tolerance {\sf tol}. \\
{\bf Output:} $(p \times k)$ maximizer $V$ with maximum $\rho$. \\
\begin{tabular}{ll}
{\footnotesize 1:} & {\bf If} $\rho_0$ is not provided:\\
{\footnotesize 2:} & \phantom{M} Select random $n \times k$ matrix $V$, orthogonalize columns 
\\
{\footnotesize 3:} & \phantom{M} Determine $\rho_0 = \frac{\tr(V^T\!AV)}{\tr(V^T\!BV)}$ \\
{\footnotesize 4:} & {\bf end}\\
{\footnotesize 5:} & {\bf for} $i = 1, 2, \dots$ \\
{\footnotesize 6:} & \phantom{M} Determine $V$ as largest $k$ eigenvectors of $A-\rho_{i-1} B$ \\
{\footnotesize 7:} & \phantom{M} Update $\rho_i = \frac{\tr(V^T\!AV)}{\tr(V^T\!BV)}$\\
{\footnotesize 8:} & \phantom{M} {\bf if} converged (e.g., $1 - \rho_{i-1}/\rho_{i} < \sf{tol}$), {\bf return} $V,\, \rho_i$\\
{\footnotesize 9:} & {\bf end}
\end{tabular} 
\end{algorithm}

Algorithm~\ref{algo:trsub-traceratio} has been proposed in this form by several authors.
Guo et al.~\cite{guo2003generalized} introduce an iterative approach for solving \eqref{eq:TR.MaxProblem}, employing a bisection technique to update $\rho$, but still addressing an eigenvalue problem of the form $A-\rho B$ during the outer iterations. In the study by Wang et al.~\cite{Wang.et.al:2007}, the trace ratio algorithm already appears in the form of Algorithm~\ref{algo:trsub-traceratio}, but no particular emphasis is put on the problems arising from the large-scale setting. To the best of our knowledge, \cite{zhang2010fast} is the first to have proposed subspace methods for computing the required $k$ eigenvectors at each iteration. This work was soon followed by \cite{ngo2012trace}, which is a comprehensive review of the trace ratio method. In contrast with the formulation of other authors, Algorithm~\ref{algo:trsub-traceratio} starts from a known estimate of $\rho$, if available. This choice is convenient for the subspace method introduced in the next section. 

The global convergence of Algorithm~\ref{algo:trsub-traceratio} is stated in  \cite[Thm.~5.1]{zhang2010fast}, where the authors also provide a bound for the (linear) convergence rate:
\[\vert\rho_{i} - \rhopt\vert \le \Big(1 - \frac{\sum_{j=1}^{k}\lambda_{p-j+1}(B)}{\sum_{j=1}^{k}\lambda_{j}(B)}\Big) \, \cdot \vert \, \rho_{i-1} - \rhopt\vert.\]
This bound suggests that if some eigenvalues of $B$ are small (compared to the largest ones), the convergence of the method may be slower. 

Other convergence results are presented in \cite{zhang2010fast,zhang2013superlinear,cai2018eigenvector}. The local quadratic convergence of the sequences $\{V_i\}_i$ and $\{\rho_i\}_i$ is presented in \cite{zhang2010fast}, under the assumption that there is a gap between the $k$th and $(k+1)$st eigenvalues of $A-\rho^\ast B$. The local quadratic convergence of $\{V_i\}_i$ is also derived in \cite[Thm.~5.7]{cai2018eigenvector} as a consequence of more general results on the self-consistent field iteration. Superlinear convergence of the trace ratio values $\{\rho_i\}_i$ is proved in \cite[Thm.~3.1]{zhang2013superlinear} under no restrictions.

The most expensive part of Algorithm~\ref{algo:trsub-traceratio} is Line~6, especially in the large-scale setting, due to the need to solve an eigenvalue problem at each iteration. As proposed in \cite{zhang2010fast,ngo2012trace}, a first improvement is to compute the $k$ leading eigenvectors of $A-\rho_i B$ using subspace methods. Both papers employ the standard Matlab routine {\sf eigs} for this purpose. This is based on the implicitly restarted Arnoldi method (IRA; see, e.g., \cite[Sec.~5.2]{Ste01}). Refined Ritz vectors are exploited in \cite{jia2021convergence}. Our experiments make use of the thick restarted Lanczos method as implemented in \cite{wu2000thick}, which is a specific instance of the more general Krylov--Schur method \cite{stewart2002krylov}. While IRA and Krylov--Schur are mathematically equivalent when shifts are equal to standard Ritz values, the latter method is much easier to implement and more reliable than using implicit QR decomposition (cf. \cite[p.~337]{Ste01}). 

Existing state-of-the-art trace ratio algorithms for large-scale data have as common aspect that for every iterate $i$, a sequence of projected eigenvalue problems is solved, involving the (large) matrix $A - \rho_i B$. This means that a subspace method routine is called at each iteration. 
As an alternative strategy, we propose a subspace method which addresses a sequence of {\em projected trace ratio problems}. From this, an approximate solution $(V,\rho_i)$ is delivered at each outer iteration, and used to enrich the search subspace. In contrast with Algorithm~\ref{algo:trsub-traceratio}, this method does not solve any of the full eigenvalue problems for $A - \rho_i B$. 

\section{A Davidson type method for trace ratio problems}\label{sec:tr-davidson}
We develop a new Davidson type method for solving the trace ratio problem. This can be used either to find the maximizer of \eqref{eq:TR.MaxProblem} or to cheaply generate a quality approximation to $\rhopt$. 
Davidson's method dates back to 1975, and is mostly applied to eigenvalue problems (see, e.g., \cite[Sec.~8.3]{saad2011numerical} and \cite{crouzeix1994davidson,stathopoulos1998dynamic}) and generalized eigenvalue problems (see, e.g., \cite{morgan1990davidson}). 
The algorithm generally consists of three phases: during {\em extraction}, a projected eigenvalue problem is solved in the current search space; one or more preconditioned residual vectors (also called {\em corrections}) are used in the {\em expansion} phase, to improve the quality of the search subspace; an optional (but usually practically necessary) phase is {\em restarting}, where the current search subspace is shrunk to a smaller one, which ideally collects the most useful information gained in the previous iterations of the method. 

Our method includes all three phases, making some necessary adaptations to account for the changing eigenvalue problem during the outer iterations. The extraction phase is described in the following section.

\subsection{Subspace extraction}\label{sec:trsub-extra}
In our subspace method for TR, the subspace extraction consists of finding an approximate solution to the trace ratio problem \eqref{eq:TR.MaxProblem} in the $j$-dimensional search subspace $\calu_j$, where $k\le j\le p$. 
Suppose $U_j \in \mathbb R^{p\times j}$ is a matrix with orthonormal columns, spanning $\calu_j$. The desired approximate solution is then $V = U_jZ$, where $Z \in \mathbb R^{j\times k}$ solves the projected trace ratio problem: 
\begin{equation}\label{eq:proj-tr}
\rho_j := \max_{\substack{V \, \subset \, \calu_j \\ V^TV = I_k}} \rho(V)
= \max_{\substack{V \, = \, U_jZ \, \in \, \R^{p \times k} \\ V^TV = I_k}} \frac{\tr(V^T\!AV)}{\tr(V^TBV)} = \max_{Z^TZ = I_k} \frac{\tr(Z^TH_jZ)}{\tr(Z^TK_jZ)},
\end{equation}
which is a TR problem for the pair of projected matrices $(H_j,\, K_j) = (U_j^TAU_j, \, U_j^TBU_j)$. 

Given the characterization of the global maximizer of TR (see Proposition~\ref{prop:char}), one significant quantity to measure the accuracy of $V$ as approximate solution is the $p\times k$ residual matrix
\begin{equation}\label{eq:res-mat}
R = AV - \rho_j \, BV - V\Lambda = (I - VV^T)(A - \rho_jB)\,V,
\end{equation}
where $\Lambda = V^T\!AV - \rho_j \, V^T\!BV$ is diagonal, with $\tr(\Lambda) = 0$, and contains the $k$ largest eigenvalues of $H_j - \rho_j K_j$. In addition, the matrix $\Lambda$ is a Rayleigh quotient for $A - \rho_j B$ and, therefore, satisfies the minimum residual condition
\[
\Lambda = \argmin_{E \, \in \, \mathbb R^{k\times k}} \, \|AV-\rho_j \, BV-VE\|_F.
\]
It is easy to see that $R^TV = 0$, i.e., $R$ satisfies a Galerkin condition (cf., e.g., \cite[Sec.~4.3]{saad2011numerical}). As stated in Proposition~\ref{prop:char}, $R = 0$ is a necessary optimality condition for $(V,\rho_j)$ to be the global solution of \eqref{eq:TR.MaxProblem}. 
Note that $R$ is also parallel to the gradient of the trace ratio in the Grassman manifold $\{VQ\, :\, V^TV = I_k, \ Q \in \mathbb{R}^{k\times k},\, Q^T Q = I_k\}$ and for the Euclidean metric (see, e.g., \cite[Sec.~2.5.3]{edelman1998geometry}).

\subsection{Subspace expansion} \label{sec:expa}
Subspace expansion aims to build a search subspace rich enough to contain the solution to the original problem. Due to memory constraints, it is reasonable to let the subspace size vary from $\kmin \ge k$ to $\kmax \le p$, with $\kmin < \kmax$. In Davidson type methods, the current basis is typically expanded by including some information from preconditioned residual vectors. The next proposition shows that the trace ratio value remains non-decreasing, regardless of the augmentation strategy.

\begin{proposition}\label{prop:mon-rho}
Consider two subspaces $\calu_j\subset\calu_{\wt j}$, where the dimensions satisfy $k \le \kmin \le j < \wt j \le p$. Then 
\[
\max_{V\,\subset\,\calu_{j},\,V^TV = I_k}\rho(V) \le \max_{V\,\subset\,\calu_{\wt j},\,V^TV = I_k}\rho(V).
\]
\end{proposition}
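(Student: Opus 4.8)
The plan is to recognize this as nothing more than monotonicity of the maximum of a fixed objective over a nested pair of feasible sets, so the argument should be short once the feasible sets are pinned down correctly.

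First I would rewrite the constraint ``$V \subset \calu_j$, $V^TV = I_k$'' as membership in the set
\[
\mathcal S_j := \{\, V \in \R^{p\times k} \ : \ V^TV = I_k, \ \spn(V) \subseteq \calu_j \,\},
\]
i.e.\ the orthonormal $p\times k$ matrices whose columns lie in $\calu_j$, and similarly for $\mathcal S_{\wt j}$. The inclusion $\calu_j \subseteq \calu_{\wt j}$ then immediately gives $\mathcal S_j \subseteq \mathcal S_{\wt j}$, since any matrix whose column space sits inside $\calu_j$ has its column space inside $\calu_{\wt j}$ as well.

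Next I would record that both maximizations are genuine: because $k \le j < \wt j \le p$, both $\calu_j$ and $\calu_{\wt j}$ contain an orthonormal $k$-frame, so $\mathcal S_j$ and $\mathcal S_{\wt j}$ are nonempty; moreover $\rho$ is well defined and continuous on each of them, since the denominator $\tr(V^TBV) = \|B^{1/2}V\|_F^2$ is strictly positive ($B$ is SPD and $V$ has full column rank), and $\mathcal S_j, \mathcal S_{\wt j}$ are compact. Hence both maxima are attained and finite (this also follows from Proposition~\ref{prop:rank}). Then the conclusion is the elementary observation that if $\wh V$ attains $\max_{\mathcal S_j}\rho$, then $\wh V \in \mathcal S_j \subseteq \mathcal S_{\wt j}$ is feasible for the second problem, whence $\max_{\mathcal S_j}\rho = \rho(\wh V) \le \max_{\mathcal S_{\wt j}}\rho$.

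I do not expect any real obstacle here: the only point that deserves a line of care is justifying the feasible-set inclusion (and, if one wants to be thorough, that both optima are attained), which is handled above. As an alternative one could write $U_j = U_{\wt j}W$ with $W \in \R^{\wt j \times j}$ having orthonormal columns and argue at the level of the projected pairs $(H_j,K_j)$ and $(H_{\wt j},K_{\wt j})$, but the direct feasible-set argument seems cleanest.
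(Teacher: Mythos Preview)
Your argument is correct and matches the paper's own proof, which is the one-line observation that the claim follows from the inclusion of the feasible regions; you have simply spelled out the details (nonemptiness, compactness, attainment) more carefully than the paper does.
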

\begin{proof}
The thesis immediately follows from the inclusion of the two feasible regions. 
\end{proof}

This result is also valid for $\wt j = p$, meaning that, as $j$ increases, the search space will eventually include the solution of the original TR \eqref{eq:TR.MaxProblem}.
Since reaching $p$ is unwanted, it might happen that the method does not converge to the optimal $\Vopt$ with a very poor initial space and expansion technique. However, this is a rare situation, which is not observed in our experiments.


Davidson's methods for eigenvalue problems typically exploit two main expansion techniques: one involves selecting a single residual vector (see, e.g., \cite{morgan1986generalizations,saad2011numerical,stathopoulos1998dynamic}), while the other is to keep the whole residual matrix, for a block expansion (see, e.g., \cite{morgan1990davidson,crouzeix1994davidson}).
We also exploit a block method, which extends the current search subspace $U_j$ by the $\bsize$ largest singular values of the residual matrix $R$, stored in $U_\bsize = [\bu_{j+1},\dots,\bu_{j+\bsize}]$. Here $1\le\bsize\le k$, where $\bsize$ is usually a common divisor of $\kmin$ and $\kmax$, which includes $\bsize = 1$. Without loss of generality, we assume $j + \bsize \le \kmax$; if this is not the case for some iterations, we take $\bsize = \kmax - j$. Note that, in our implementation, the block size $\bsize$ is further reduced by considering only those singular vectors $\bu_i$, $i=j+1,\dots,j+\bsize$, that correspond to singular values $\sigma_i \ge 10^{-4}\cdot\sigma_1$.

Computationally, this block expansion amounts to a singular value decomposition (SVD) of a (thin) $p \times k$ matrix, with complexity $\calo(pk^2)$. The rationale behind the choice of the largest singular vectors is to condense the information contained in the residual matrix, which will diminish as $R$ approaches $0$. In addition, the SVD of $R$ will systematically provide the spectral norm of the residual matrix, which can be used as a stopping criterion. 

It holds that $U_\bsize \perp U_j$ (i.e., the column vectors of $U_\bsize$ are orthogonal to those of $U_j$), since $R$ satisfies $R\perp \calu_j$. 
However, it is recommended to perform a reorthogonalization step (of the form $U_\bsize \leftarrow U_\bsize - U_j(U_j^TU_\bsize)$), followed by the QR decomposition of $U_\bsize$, to keep the basis numerically orthogonal. When $U_j$ is $p\times j$, this takes $\calo(pj\bsize)$ work. When $\bsize = 1$, the QR decomposition reduces to the renormalization of $U_1 = \bu_{j+1}$.


Finally, our method can be implemented without preconditioning the residual matrix. 
In standard eigenvalue problems preconditioning is frequently used. Otherwise, Davidson's method without the preconditioning of residuals is equivalent to (but more expensive than) the Arnoldi method  (see, e.g., \cite{stathopoulos1998dynamic}), where the extra cost is in the computation of an eigenvalue decomposition any time a new vector is added to the search subspace. 

\subsection{Restart}\label{sec:restart}
As $j$ increases, solving the projected trace ratio problem becomes increasingly expensive.
In addition, the $\calo(pj)$ storage requirement also increases.
Generally, our intention is $\kmax \ll p$.
Therefore, the method needs to be restarted, that is, the dimension of the search space $\calu_j$ is reduced to minimum dimension $\kmin$, once $j$ has reached the maximum dimension $\kmax$.

A typical restarting method for eigenvalue problems is to obtain $\kmin \ge k$ eigenvectors from the $\kmax\times\kmax$ projected problem, containing the $k$ eigenvectors of interest. This strategy is adopted in the Davidson type method for the GEP (cf.~Section~\ref{sec:trsub-gep}). The equivalent choice for the trace ratio problem would be to consider the projected TR \eqref{eq:proj-tr} for $Z\in \mathbb R^{\kmax \times \kmin}$. However, for $k \le \kmin \le p$, the solution of the $k$-dimensional TR is generally not included in the $\kmin$-dimensional solution (cf., e.g., \cite{ferrandi2022trace}). Additionally, as shown in \cite[Prop.~7]{ferrandi2022trace}, the optimal trace ratio value is non-increasing as the size of its solution increases. Therefore the new trace ratio value would usually be smaller than the one corresponding to $Z\in \mathbb R^{\kmax \times k}$, causing a non-monotonic behavior of $\rho$ throughout the iterations.

Nevertheless, it is possible to obtain a restarting strategy that keeps the trace ratio non-decreasing. Let $\calu_\kmax$ be a search subspace of dimension $\kmax$ and $\calu_\kmin$ be the subspace after restarting. In a typical situation, $\calu_\kmin\subset\calu_\kmax$, and therefore, from Proposition~\ref{prop:mon-rho},
\begin{equation}\label{eq:inclusion}
\max_{V\,\subset\, \calu_\kmin, \,V^TV = I_k}\rho(V) \le \max_{V\,\subset\, \calu_\kmax, \,V^TV = I_k}\rho(V) = \rho_\kmax.
\end{equation}
Since we aim at keeping the monotonicity of the trace ratio throughout the iterations, a subspace $\calu_\kmin$ which attains equality in \eqref{eq:inclusion} would be the ideal candidate for restarting. 
A choice that satisfies this requirement is the following. Let $U_\kmax$ be an orthonormal basis for $\calu_\kmax$, and consider the $k$-dimensional projected TR \eqref{eq:proj-tr}, for the pencil $(H_\kmax,\,K_\kmax) = (U_\kmax^TAU_\kmax, \,U_\kmax^TBU_\kmax)$, where the matrices of size $\kmax\times \kmax$ and the solution is $(Z, \rho_\kmax)$. Now expand $Z$ with the next $\kmin - k$ eigenvectors of $H_\kmax - \rho_\kmax K_\kmax$, denoted by $Z_{\kmin-k}$.
Let $Z_{\kmin} = [Z\ \, Z_{\kmin-k}]$. The search subspace for the restart is chosen as 
\[U_{\kmin} = U_\kmax Z_\kmin, \quad\text{with}\quad \calu_\kmin\subset\calu_\kmax.\]
Simple substitutions show that
\begin{align*}
\max_{V\,\subset\, \calu_\kmin,\, V^TV = I_k}\rho(V) = \max_{V = U_\kmax Z_\kmin C,\ C^TC = I_k}\frac{\tr(C^TZ_\kmin^TH_\kmax Z_\kmin C)}{\tr(C^TZ_\kmin^TK_\kmax Z_\kmin C)} \le \rho_\kmax, 
\end{align*}
where the upper bound is given by \eqref{eq:inclusion}. From the definition of $Z$, it is readily seen that $C = [I_k\ 0]^T \in \R^{\kmax \times k}$ achieves $\rho_\kmax$, therefore attaining equality in the relation \eqref{eq:inclusion}. Additionally, for this value of $C$, the new tentative solution is the same as the one coming from $\calu_\kmax$, i.e., $V = U_\kmax Z_{\kmin}C = U_\kmax Z$. This also means that, immediately after this type of restarting, it is not necessary to run the extraction phase to get a new vector for the expansion. 

The matrix $C$ is not uniquely determined, so the equality between the tentative solutions is always up to $k\times k$ orthonormal transformations. Other choices can be made to replace $Z_{\kmin - k}$. The above argument shows that, to attain equality in \eqref{eq:inclusion}, it is sufficient for the new basis to be in the span of $U_\kmax$ and contain $U_\kmax Z$. 

\subsection{Algorithm}
Adding all ingredients, a pseudocode for a subspace method is given in Algorithm~\ref{algo:traceratio-subspace}. Each outer iteration requires two matrix-vector products, one with $A$ and one with $B$, for the extraction phase. 
The multiplication with $A$ and $B$ may have cost $\calo(p^2)$, but it may also be more efficient (cf.~Section~\ref{sec:classification}).
As previously remarked, the cost of computing the left singular vectors of $R$ is $\calo(pk^2)$. The reorthogonalization cost in Line~11 is $\calo(p\kmax\bsize)$.
Therefore, for large matrices, the main cost for an outer iteration is due to the matrix-vector products.

\begin{algorithm}
\caption{Subspace method for large-scale trace ratio problems}
\label{algo:traceratio-subspace}
{\bf Input:} Symmetric $A \in \R^{p \times p}$, SPD $B \in \R^{p \times p}$, $k < p$,
maximum and minimum dimensions with $k \le \kmin < \kmax \le p$, block size $1\le\bsize\le k$, initial $p \times \kmin$ matrix $U_\kmin$ with orthonormal columns, tolerance {\sf tol}.\\
{\bf Output:} $p \times k$ matrix $V$ with orthonormal columns maximizing the trace ratio. \\
\begin{tabular}{ll}
{\footnotesize 1:} & Compute $AU_\kmin$, $BU_\kmin$ and $H_\kmin = U_\kmin^T\!AU_\kmin$, $K_\kmin = U_\kmin^T\!BU_\kmin$; $j = \kmin$\\
{\footnotesize 2:}& {\bf for} $i = 1, 2, \dots$ \\
{\footnotesize 3:} & \phantom{M} Run Algorithm~\ref{algo:trsub-traceratio} on $(H_j, K_j)$, with output $(Z,\rho_j)$\\ 
{\footnotesize 4:} & \phantom{M} $V = U_jZ$, \ $R=AV-\rho_j \, BV -V (Z^T(H_j - \rho_j \, K_j)Z)$ (cf.~\eqref{eq:res-mat})\\
{\footnotesize 5:} & \phantom{M} Compute $U_\bsize = [\bu_{j+1},\dots,\bu_{j+\bsize}]$
largest left singular vectors of $R$\\
{\footnotesize 6:} & \phantom{M} {\bf if} $\|R\| < {\sf tol}$,  \ {\bf return}, \ {\bf end} \\
{\footnotesize 7:} & \phantom{M} {\bf if}  $j = \kmax$:\\
{\footnotesize 8:} & \phantom{MM} Store $\kmin$ leading eigenvectors of $H_\kmax-\rho_\kmax K_\kmax$ in $Z_\kmin = [Z \ \ Z_{\kmin-k}]$\\
{\footnotesize 9:} & \phantom{MM} Set $j = \kmin$, shrink $U_j = U_\kmax Z_\kmin$,  $AU_j = AU_\kmax Z_\kmin$, $BU_j = BU_\kmax Z_\kmin$\\
 & \phantom{MMm} $H_j = Z_\kmin^T\!H_\kmax Z_\kmin$, $K_j = Z_\kmin^T\!K_\kmax Z_\kmin$ \\
{\footnotesize 10:} & \phantom{M} {\bf end}\\
{\footnotesize 11:} & \phantom{M} Reorthogonalize $U_\bsize$ against $U_j$, expand $U_{j+\bsize} = [U_j\ \ U_\bsize]$\\
{\footnotesize 12:} & \phantom{M} Compute $A\,U_\bsize$, $B\,U_\bsize$ and $U_{j+\bsize}^TA\,U_\bsize$, $U_{j+\bsize}^TB\,U_\bsize$ \\
{\footnotesize 13:} & \phantom{M} $j = j + \bsize$\\
{\footnotesize 14:} & {\bf end}
\end{tabular} 
\end{algorithm}

Given that $R = 0$ is a necessary optimality condition, a reasonable stopping criterion may be based on the spectral norm of the residual matrix, as in Line~6 of Algorithm~\ref{algo:traceratio-subspace}. This is a convenient choice since the expansion phase requires the SVD of $R$. (The expansion is anticipated to allow the computation of the spectral norm. If another norm is selected as a stopping criterion, the expansion may be postponed.)

The extraction phase (Line~3 of Algorithm~\ref{algo:traceratio-subspace}) consists in running Algorithm~\ref{algo:trsub-traceratio} with $j\times j$ matrices. In our experiments, the trace ratio value makes little progress in the last iterations of the method, while the residual norm keeps decreasing. Therefore, to reduce the number of inner iterations, for each projected TR we suggest starting Algorithm~\ref{algo:trsub-traceratio} from the latest available value of $\rho_{j-1}$, and computing the initial matrix $Z$ as the solution to the eigenvalue problem for $H_j-\rho_{j-1} K_j$. The inner stopping criterion of Algorithm~\ref{algo:trsub-traceratio} is given by $\|(I_k - ZZ^T)(H_j - \rho_j K_j)Z\| < 10^{-8}$, where $\rho_j = \rho(Z)$ is the trace ratio in the current iterate $Z$. 
This expression corresponds to the spectral norm of a residual matrix; cf.~\eqref{eq:res-mat}. 

In the computation of $R$ (cf.~Line~4), the trace ratio $\rho_j$ is an approximate value for the maximum of the $j\times j$ projected TR \eqref{eq:proj-tr}. In particular, $Z$ is only an approximation to the eigenvectors corresponding to the $k$ largest eigenvalues of $H_j - \rho_j K_j$, and, as a consequence, $\Lambda = Z^T(H_j - \rho_j K_j)Z = V^T(A-\rho_j B)V$ is generally not diagonal. This does not contradict the definition of the residual matrix in Section~\ref{sec:trsub-extra}, since there $\rho_j$ is the (exact) maximum of the corresponding projected TR.    

Algorithm~\ref{algo:traceratio-subspace} is initialized with a random matrix $U_\kmin$ with orthonormal columns. When a crude estimate of the optimal trace ratio value $\wh \rho \approx \rhopt$ is available, one may use a basis for the $\kmin$-dimensional Krylov subspace $\calk_{\kmin}(A-\wh \rho B, \bv_1)$ as starting point (where $\bv_1\in \mathbb R^p$ is randomly chosen on the unit sphere). However, this estimate may be difficult to obtain; in fact, our subspace method is exactly meant to compute this quantity and associated $\Vopt$.

\section{Analysis}\label{sec:trsub-analysis}
The output quality of Algorithm~\ref{algo:traceratio-subspace} is now analyzed from a theoretical viewpoint. Throughout the section, we assume that there is a gap between the $k$th and the $(k+1)$st eigenvalue of $A-\rhopt B$, so that the solution to TR $\Vopt$ is uniquely determined. 
Let $\rhopt$ be its corresponding maximum, and let $U$ be an orthonormal basis for the current search subspace $\calu$, of dimension $\kmin\le j \le\kmax$. Let $(Z, \rho)$ be the solution to the projected TR \eqref{eq:proj-tr} when the search subspace is $\calu$, and let $V = UZ$ be the approximate solution to the original TR problem. As in Section~\ref{sec:trsub-extra}, define $H = U^T\!AU$ and $K = U^TBU$. 

Define the subspaces $\calv = {\rm span}(V)$ and $\calv^\ast = {\rm span}(\Vopt)$. The goal is to determine a bound for the eigenvalues of $H - \rho K$ and the angle $\angle(\calv,\calv^\ast)$ in terms of either the spectral norm of the residual matrix or the sine of the angle $\angle(\calu, \calv^\ast)$. The latter is defined as (see, e.g., \cite{jia2000analysis}):
\begin{equation}\label{eq:sine}
\sin\theta := \sin\angle(\calu, \calv^\ast) = \sin(U,\Vopt) = \|U_\perp^T \Vopt\|,
\end{equation}
where $U_\perp$ is an orthonormal basis for the orthogonal complement of $\calu$. Since $U$ and $\Vopt$ have different numbers of columns, the sine is not symmetric in its arguments (cf., e.g., \cite{knyazev2002principal}). Also note that \eqref{eq:sine} is well defined for any orthonormal bases of $\calu$ and $\calv^\ast$. Convergence results may be obtained under the hypothesis that $\sin\theta\to 0$, which means that the search subspace will eventually become rich enough to contain the solution of TR.  

\subsection{A bound for the approximate eigenvalues}
In the Rayleigh--Ritz extraction for standard eigenvalue problems, it is known that some Ritz values converge to the desired eigenvalues, as $\theta\to 0$ (see, e.g., \cite[Thm.~4.1]{jia2000analysis} and \cite[Sec.~4.4]{Ste01} for one approximate eigenvalue). 
The TR problem has the additional challenges that the optimal trace ratio value $\rhopt$ is not known in advance, and the underlying eigenvalue problem changes throughout the iterations. Nevertheless, it is still possible to extend this result, by showing that there exist $k$ eigenvalues of $H - \rho K$ converging to the $k$ largest eigenvalues of $A - \rhopt B$, as $\theta\to 0$. 

To this aim, the next proposition shows a bound similar to the one derived in \cite[Thm.~4.1]{jia2000analysis}. 

\begin{proposition}\label{prop:trsub-eig}
Let $(\Vopt,\rhopt)$ be the solution to the TR \eqref{eq:TR.MaxProblem}, with $(A-\rhopt B)\Vopt = \Vopt\Lambda^\ast$, and let $\rho$ be the solution to the projected TR \eqref{eq:proj-tr}, restricted to the subspace $\calu = {\rm span}(U)$, where $U$ has orthonormal columns. Let $\Vopt$ be decomposed as
\begin{equation}\label{eq:decoV1}
\Vopt = UU^T \Vopt + U_\perp U_\perp^T \Vopt =: UE + U_\perp F.
\end{equation}
Here, $E$ is $j \times k$ and $U_\perp$ is $p \times (p-j)$.
In addition, let $H-\rho K$ be the projection of $A-\rho B$ onto $U$, i.e., $H-\rho K = U^T(A-\rho B)U$. Let $\theta$ be as in \eqref{eq:sine}, and define the $j\times k$ matrix $\wh E = E\,(E^TE)^{-1/2}$ with orthonormal columns. Then there exists a matrix 
\begin{equation}\label{eq:pertG}
G = [(\rho -\rhopt)\,K\wh E + U^T(A-\rhopt B)\,U_\perp F\,(E^T\!E)^{-1/2}]\,\wh E^T
\end{equation}
with
\[
\|G\|\le (\rhopt -\rho) \, \|B\| + \frac{\sin\theta}{\sqrt{1-\sin^2\theta}} \, \|A-\rhopt B\|,
\]
and such that $((E^TE)^{1/2}\,\Lambda^\ast\,(E^TE)^{-1/2}, \,\wh E)$ is an eigenblock of $H-\rho K + G$, with the same eigenvalues as $\Lambda^\ast$.
\end{proposition}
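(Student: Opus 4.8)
The plan is to project the exact eigenblock relation $(A-\rhopt B)\Vopt = \Vopt\Lambda^\ast$ onto $\calu$ and to push the resulting mismatch into the perturbation $G$. First I would collect the elementary consequences of the decomposition \eqref{eq:decoV1}: since $(\Vopt)^T\Vopt = I_k$ we have $E^TE + F^TF = I_k$, and since $\|F\| = \|U_\perp^T\Vopt\| = \sin\theta$, which we assume to be $<1$, the matrix $E^TE = I_k - F^TF$ is symmetric positive definite with smallest eigenvalue $1-\sin^2\theta$. Consequently $(E^TE)^{\pm 1/2}$ are well defined, $\|(E^TE)^{-1/2}\| = (1-\sin^2\theta)^{-1/2}$, and $\wh E = E\,(E^TE)^{-1/2}$ has orthonormal columns.

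Next I would left-multiply $(A-\rhopt B)(UE + U_\perp F) = (UE + U_\perp F)\Lambda^\ast$ by $U^T$ to obtain $U^T(A-\rhopt B)UE + U^T(A-\rhopt B)U_\perp F = E\Lambda^\ast$, then write $U^T(A-\rhopt B)U = H - \rho K + (\rho-\rhopt)K$ (using only $H = U^T\!AU$ and $K = U^TBU$), substitute $E = \wh E\,(E^TE)^{1/2}$, and right-multiply by $(E^TE)^{-1/2}$. This turns the identity into
\[
(H-\rho K)\wh E + (\rho-\rhopt)K\wh E + U^T(A-\rhopt B)U_\perp F\,(E^TE)^{-1/2} = \wh E\,(E^TE)^{1/2}\Lambda^\ast(E^TE)^{-1/2}.
\]
I would then group the two middle terms into a matrix $M$ and right-multiply by $\wh E^T$; since $\wh E^T\wh E = I_k$, this yields $(H-\rho K + G)\wh E = \wh E\,(E^TE)^{1/2}\Lambda^\ast(E^TE)^{-1/2}$ with $G = M\wh E^T$, which is exactly the expression in \eqref{eq:pertG}. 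Because $\wh E$ has orthonormal columns and $(E^TE)^{1/2}\Lambda^\ast(E^TE)^{-1/2}$ is similar to $\Lambda^\ast$, this shows that $\big((E^TE)^{1/2}\Lambda^\ast(E^TE)^{-1/2},\,\wh E\big)$ is an eigenblock of $H-\rho K + G$ with the eigenvalues of $\Lambda^\ast$.

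The norm estimate is then routine. From $\|\wh E^T\| = 1$ and the triangle inequality, $\|G\|\le\|M\|\le\|(\rho-\rhopt)K\wh E\| + \|U^T(A-\rhopt B)U_\perp F\,(E^TE)^{-1/2}\|$. The first term is bounded by $|\rho-\rhopt|\,\|K\|$; here $\|K\| = \|U^TBU\|\le\|B\|$, and $\rho\le\rhopt$ because the projected maximum cannot exceed the global one (Proposition~\ref{prop:mon-rho} with $\wt j = p$), so the first term is at most $(\rhopt-\rho)\,\|B\|$. For the second term I would use $\|U^T\| = \|U_\perp\| = 1$, $\|F\| = \sin\theta$, and $\|(E^TE)^{-1/2}\| = (1-\sin^2\theta)^{-1/2}$, giving $\frac{\sin\theta}{\sqrt{1-\sin^2\theta}}\,\|A-\rhopt B\|$; summing the two bounds yields the claimed inequality.

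I do not expect a genuine obstacle: once $E$ is replaced by the orthonormal $\wh E$, the whole argument reduces to projecting the exact eigenblock relation and applying submultiplicative norm inequalities. The only subtleties worth stressing are that $\sin\theta < 1$ (equivalently $\theta < \pi/2$) is precisely what makes $E^TE$ invertible, so that the statement genuinely concerns the regime $\sin\theta\to 0$, and that one must verify $\rho\le\rhopt$ in order to replace $|\rho-\rhopt|$ by $\rhopt-\rho$ in the final bound.
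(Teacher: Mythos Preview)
Your proposal is correct and follows essentially the same route as the paper's proof: project the exact eigenblock relation onto $\calu$, absorb the mismatch into $G = M\wh E^T$, and bound $\|G\|$ termwise. Your treatment is in fact slightly cleaner than the paper's, since you obtain $\|(E^TE)^{-1/2}\| = (1-\sin^2\theta)^{-1/2}$ directly from $E^TE = I_k - F^TF$ rather than via the SVD of $E$, and you make explicit the inequality $\rho\le\rhopt$ needed to drop the absolute value.
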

\begin{proof}
From the optimality of $(\Vopt,\rhopt)$ it follows that $U^T(A-\rhopt B)(UE + U_\perp F) = E\Lambda^\ast$, so
\begin{align*}
(H-\rhopt K)E + U^T(A-\rhopt B)\,U_\perp F = E\Lambda^\ast.
\end{align*}
A perturbation $G$ should satisfy
\[
(H-\rhopt K + G)\,\wh E = \wh E\,(E^TE)^{\frac12}\,\Lambda^\ast \,(E^TE)^{-\frac12}.
\]
One possible solution is given by \eqref{eq:pertG}. An upper bound for the spectral norm of $G$ is the following. From the triangular inequality, the unitarily invariance of the spectral norm, and submultiplicativity, we obtain
\[
\|G\|\le (\rhopt -\rho) \, \|B\| + \|A-\rhopt B\|\cdot\|U_\perp^T\Vopt\|\cdot\|(E^T\!E)^{-1}E^T\|.
\]
Let us consider the SVD of $E = U_E\Sigma_EV_E^T$. Then $(E^TE)^{-1}E^T = V_E\Sigma_E^{-1}U_E^T.$ Its spectral norm corresponds to the reciprocal of the smallest singular value of $U^T\Vopt$, which is the cosine of the largest angle between $U$ and $\Vopt$. Then  
\[
\|U_\perp^T\Vopt\|\cdot\|(E^T\!E)^{-1}E^T\| = \frac{\sin\theta}{\sqrt{1-\sin^2\theta}},
\]
which completes the proof.
\end{proof}

This proposition holds for any $\rho\le \rhopt$. In particular, when $\rho = \rhopt$, the result reduces to \cite[Thm.~4.1]{jia2000analysis}. Nevertheless, the solution to the projected TR \eqref{eq:proj-tr} is the closest approximation to $\rhopt$ given $U$, making the bound for $\|G\|$ tighter. More importantly, $\rho$ plays a role in showing that the term $\rhopt - \rho$ vanishes as the search subspace approaches the solution to the trace ratio. 

\begin{proposition}\label{prop:deltarho}
Let $(\Vopt,\rhopt)$ be the solution to the TR \eqref{eq:TR.MaxProblem}, and  $\theta$ be as in \eqref{eq:sine}. Let $\rho$ be the solution to the projected TR \eqref{eq:proj-tr}, where the problem is projected onto the subspace $\calu = {\rm span}(U)$ and $U$ has orthonormal columns. Then
\[
\rho \to \rhopt \quad\text{as} \quad \theta\to 0.
\]
\end{proposition}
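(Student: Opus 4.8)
The plan is to exploit the two-sided squeeze on $\rho$. On one side, by Proposition~\ref{prop:mon-rho} and the fact that $\calu$ is contained in $\R^p$, we always have $\rho \le \rhopt$; so it suffices to produce a lower bound for $\rho$ that tends to $\rhopt$ as $\theta \to 0$. The natural candidate to plug into the projected maximization is the orthogonal projection of the exact solution onto $\calu$: writing $\Vopt = UE + U_\perp F$ as in \eqref{eq:decoV1} and setting $\wh E = E(E^TE)^{-1/2}$, the matrix $\wh E$ has orthonormal columns and $V := U\wh E$ is a feasible point for the projected TR \eqref{eq:proj-tr}. Hence $\rho \ge \rho(V) = \tr(\wh E^T H \wh E)/\tr(\wh E^T K \wh E)$, and the task reduces to showing this Rayleigh-type quotient converges to $\rhopt = \tr((\Vopt)^T A \Vopt)/\tr((\Vopt)^T B \Vopt)$.

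Next I would control the numerator and denominator separately as $\theta \to 0$. Since $\|F\| = \|U_\perp^T \Vopt\| = \sin\theta$ and $\|(E^TE)^{-1/2}\| = 1/\sqrt{1-\sin^2\theta}$ (the smallest singular value of $E$ is $\cos\theta$), we have $\|V - \Vopt W\| \to 0$ for a suitable orthogonal $W$; more directly, expanding $U\wh E = (\Vopt - U_\perp F)(E^TE)^{-1/2}$ and using $\|A\|$, $\|B\|$ as bounds, one gets
\[
\bigl| \tr(\wh E^T H \wh E) - \tr((\Vopt)^T A \Vopt) \bigr| \le c_A(\sin\theta), \qquad
\bigl| \tr(\wh E^T K \wh E) - \tr((\Vopt)^T B \Vopt) \bigr| \le c_B(\sin\theta),
\]
where $c_A, c_B$ are explicit functions vanishing at $0$ (depending on $\|A\|$, $\|B\|$, and $k$). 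The denominator $\tr((\Vopt)^T B \Vopt)$ is bounded below by $k\,\lambda_p(B) > 0$ since $B$ is SPD, so for $\theta$ small enough $\tr(\wh E^T K \wh E)$ stays bounded away from zero, and the quotient depends continuously on the two traces. Therefore $\rho(V) \to \rhopt$, and by the squeeze $\rho \to \rhopt$.

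The main obstacle is not conceptual but bookkeeping: one must verify that $V = U\wh E$ is genuinely admissible (orthonormal columns, which follows from $U^TU = I$ and $\wh E^T \wh E = I_k$) and that $E^TE$ is invertible, i.e. $\cos\theta > 0$ — this holds precisely because $\sin\theta < 1$, which is guaranteed once the search subspace is rich enough (the regime of interest). The only mildly delicate point is keeping the perturbation estimates clean: the factor $(E^TE)^{-1/2}$ blows up as $\sin\theta \to 1$, so the bounds $c_A, c_B$ will carry a $1/\sqrt{1-\sin^2\theta}$ (or $1/\cos\theta$) prefactor, exactly as in Proposition~\ref{prop:trsub-eig}; this is harmless for the limit $\theta \to 0$ but should be written carefully. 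One could alternatively package this by invoking Proposition~\ref{prop:trsub-eig} directly: it exhibits $k$ eigenvalues of $H - \rho K + G$ equal to those of $\Lambda^\ast$ with $\|G\| \to 0$ as $\theta \to 0$ (using that $\rhopt - \rho \to 0$ is what we are proving — so this route is slightly circular and the self-contained Rayleigh-quotient argument above is cleaner).
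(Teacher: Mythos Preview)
Your proof is correct and takes a genuinely different route from the paper's. You argue variationally: exhibit the feasible test point $V=U\wh E$ in the projected problem, observe $\rho\ge\rho(V)$, and show $\rho(V)\to\rhopt$ by elementary continuity of the numerator and denominator (with the SPD lower bound on $B$ keeping the denominator away from zero). The paper instead works with the algebraic identity
\[
\tr((\Vopt)^TB\Vopt)\,(\rhopt-\rho)=\tr((\Vopt)^T(A-\rho B)\Vopt),
\]
expands the right-hand side via $\Vopt=UE+U_\perp F$, and then uses the TR characterization (Proposition~\ref{prop:char}) in an essential way: after replacing $E$ by an orthonormal $\wt E$ (at cost $\calo(\|F\|^2)$), the term $\tr(\wt E^T(H-\rho K)\wt E)$ is $\le 0$ because $\rho$ is the maximum of the projected TR, forcing $\rhopt-\rho\to 0$. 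Your argument is more self-contained (it never invokes the optimality characterization of $\rho$ beyond $\rho$ being a maximum over a feasible set) and is arguably the cleaner proof of the bare convergence statement; the paper's approach, by contrast, ties $\rhopt-\rho$ directly to the projected matrix $H-\rho K$, which meshes better with the eigenvalue perturbation analysis surrounding it. Your closing remark that routing through Proposition~\ref{prop:trsub-eig} would be circular is also correct.
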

\begin{proof}
Let $\Vopt$ be written in the form \eqref{eq:decoV1}. Then it follows that
\begin{align}
\tr((\Vopt)^TB\Vopt)\,(\rhopt - \rho) &= \tr((\Vopt)^T(A-\rho B)\Vopt)\nonumber\\
&= \tr((UE + U_\perp F)^T(A-\rho B)(UE + U_\perp F)) \nonumber\\
&= \tr(E^T(H-\rho K)E) + \tr(F^TU_\perp^T(A-\rho B)(UE + \Vopt)).\label{eq:delta-rho-deco}
\end{align}
Since $F = U_\perp^T\Vopt$, the second term of the right-hand side vanishes when $\theta \to 0$. 

Given that $\Vopt$ has orthonormal columns, as $\theta\to 0$, the columns of $E$ approach orthogonality. Therefore, $E$ can be split into $E = \wt E + \Delta E$, where $\wt E$ has orthonormal columns. As one possible decomposition, given the SVD of $E = U_E\Sigma_E V_E^T$, we may define $\wt E = U_EV_E^T$ and $\Delta E = U_E\,(\Sigma_E - I_k)\,V_E^T$, with $\|\Delta E\|\to 0$. In addition, $\calo(\|\Delta E\|) = \calo(\|F\|^2)$, which easily follows from the relation $E^TE + F^TF = I_k$ (which is equivalent to $-F^TF = \wt E^T\Delta E + \Delta E^T \wt E + \Delta E^T\Delta E$). 

The conclusion is that 
\begin{equation}\label{eq:delta-rho-deco2}
\tr(E^T(H-\rho K)E) = \tr(\wt E^T(H-\rho K)\wt E) + \calo(\|F\|^2),
\end{equation}
where $\tr(\wt E^T(H-\rho K)\wt E) \le 0$ from the characterization of the solution to TR (cf.~Proposition~\ref{prop:char}). Since $\tr((\Vopt)^TB\Vopt)$ is bounded from below by the $k$ smallest eigenvalues of $B$, and $\rhopt - \rho \ge 0$, from \eqref{eq:delta-rho-deco} and \eqref{eq:delta-rho-deco2} it follows that $\rhopt - \rho \to 0$ as $\theta\to 0$, which completes the proof.
\end{proof}

Proposition~\ref{prop:trsub-eig} shows that the $k$ largest eigenvalues of $A - \rhopt B$ are equal to $k$ eigenvalues of $H - \rho K + G$, where $G$ is given by \eqref{eq:pertG}. Then the following result holds.
\begin{corollary}
There exists a subset of $k$ eigenvalues of $H - \rho K$ converging to the $k$ largest eigenvalues of $A - \rhopt B$, as $\theta\to 0$.
\end{corollary}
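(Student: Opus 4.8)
The plan is to combine Proposition~\ref{prop:trsub-eig} with Proposition~\ref{prop:deltarho} and a standard eigenvalue perturbation bound. Proposition~\ref{prop:trsub-eig} tells us that the $k$ eigenvalues of $\Lambda^\ast$ --- which are precisely the $k$ largest eigenvalues of $A-\rhopt B$ --- coincide exactly with $k$ of the eigenvalues of the perturbed matrix $H-\rho K + G$. So the gap between ``$k$ eigenvalues of $H-\rho K$'' and ``the $k$ largest eigenvalues of $A-\rhopt B$'' is entirely controlled by $\|G\|$: if $\|G\|\to 0$, a Weyl/Bauer--Fike type bound for symmetric matrices forces a matching of $k$ eigenvalues of $H-\rho K$ to those $k$ eigenvalues of $H-\rho K + G$ (and hence to the $k$ largest eigenvalues of $A-\rhopt B$) up to an error of order $\|G\|$.

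First I would invoke the norm bound from Proposition~\ref{prop:trsub-eig},
\[
\|G\|\le (\rhopt-\rho)\,\|B\| + \frac{\sin\theta}{\sqrt{1-\sin^2\theta}}\,\|A-\rhopt B\|,
\]
and observe that the second summand visibly tends to $0$ as $\theta\to 0$, while the first summand tends to $0$ by Proposition~\ref{prop:deltarho}, since $\rho\to\rhopt$. Hence $\|G\|\to 0$ as $\theta\to 0$. Next I would use the fact that $H-\rho K$ and $H-\rho K+G$ are both real symmetric $j\times j$ matrices (note $H$, $K$ are symmetric and, although $G$ as written in \eqref{eq:pertG} need not be exactly symmetric, it can be symmetrized without changing the order of its norm, or one simply uses a Bauer--Fike bound valid for non-normal perturbations of a symmetric matrix). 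By Weyl's inequality, the $i$th largest eigenvalue of $H-\rho K$ differs from the $i$th largest eigenvalue of $H-\rho K+G$ by at most $\|G\|$, for every $i$. Since the $k$ eigenvalues of $\Lambda^\ast$ are $k$ specific eigenvalues of $H-\rho K+G$, selecting the corresponding $k$ eigenvalues of $H-\rho K$ (matched by index in the sorted order, restricted to that subset) yields a set of $k$ eigenvalues of $H-\rho K$ each within $\|G\|$ of the $k$ largest eigenvalues of $A-\rhopt B$. Letting $\theta\to 0$ gives convergence.

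The one subtlety worth spelling out --- and the step I expect to be the mild obstacle --- is the bookkeeping of \emph{which} $k$ eigenvalues of $H-\rho K$ are selected: Proposition~\ref{prop:trsub-eig} guarantees that $\Lambda^\ast$ sits inside the spectrum of $H-\rho K+G$ as an eigenblock, but it does not by itself pin down where those $k$ eigenvalues lie among the $j$ eigenvalues of $H-\rho K+G$ in sorted order. For a clean index-matching via Weyl's inequality one wants the $k$ eigenvalues of $\Lambda^\ast$ to be (eventually) the $k$ largest among all $j$ eigenvalues of $H-\rho K+G$. This follows once $\theta$ is small enough: $H-\rho K+G$ is a small perturbation of $U^T(A-\rhopt B)U$ plus the correction, whose relevant $k$ eigenvalues approach $\lambda_1(A-\rhopt B),\dots,\lambda_k(A-\rhopt B)$; under the standing gap assumption $\lambda_k(A-\rhopt B)>\lambda_{k+1}(A-\rhopt B)$ these are separated from the rest of the spectrum, so for $\theta$ sufficiently small they are indeed the $k$ dominant eigenvalues and the index matching is unambiguous. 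Alternatively, one avoids this entirely by stating the conclusion (as the corollary does) merely as the existence of \emph{a} subset of $k$ eigenvalues of $H-\rho K$ with the stated limit, for which a subset-to-subset matching via the symmetric perturbation bound suffices and no ordering argument is needed. Either way, the proof is short: $\|G\|\to 0$ plus eigenvalue continuity.
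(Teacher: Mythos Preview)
Your proposal is correct and follows essentially the same approach as the paper: show $\|G\|\to 0$ via Proposition~\ref{prop:trsub-eig} and Proposition~\ref{prop:deltarho}, then apply an eigenvalue perturbation bound. The paper's proof is terser: it invokes Bauer--Fike directly (which, since $H-\rho K$ is symmetric, yields $|\lambda_{\pi_i}-\lambda_i^\ast|\le\|G\|$ without any need to symmetrize $G$ or worry about Weyl's hypotheses), and it does not dwell on the index-matching subtlety you discuss---it simply concludes existence of the subset, exactly as you note suffices in your final paragraph.
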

\begin{proof}
From Bauer--Fike Theorem (see, e.g., \cite[p.~18]{Par74}) it follows that for any $\lambda^\ast_i$ eigenvalue of $\Lambda^\ast$, there is an eigenvalue $\lambda_{\pi_i}$ of $H - \rho K$ such that
\[
\vert\lambda_{\pi_i} - \lambda^\ast_i\vert \le \|G\|.
\]
In addition, from Proposition~\ref{prop:trsub-eig} and Proposition~\ref{prop:deltarho}, it holds that $\|G\|\to 0$ as $\theta\to 0$. This completes the proof. 
\end{proof}

This result does not guarantee that the converging eigenvalues are actually the $k$ largest eigenvalues of $H-\rho K$, which is in line with the conclusion of Jia and Stewart \cite{jia2000analysis} about Ritz values converging to the desired eigenvalues.





\subsection{A bound for the approximate solution} We now provide a bound on the angle between the approximate solution and the solution to TR, i.e., $\angle(\calv,\calv^\ast)$, in terms of $\angle(\calu,\calv^\ast)$ and $\rhopt - \rho$. This result extends the bound in \cite[Thm.~2]{stewart2001generalization} for approximate eigenspaces. It is based on the ${\rm sep}$ function, which measures the separation between the spectra of two matrices (see, e.g., \cite[Eq.~(2.3)]{jia2000analysis}): 
\begin{equation}\label{eq:sep}
{\rm sep}(A,B) := \min_{\|P\| = 1}\|PA - BP\|,
\end{equation}
where $A$ is $k\times k$, $B$ is $(p-k)\times(p-k)$, and $P$ is $(p-k)\times k$, for any $1\le k< p$. It can be shown (see, e.g., \cite{jia2000analysis}) that ${\rm sep}$ is bounded from above by $\min_{i,j}\vert\lambda_i(A) - \lambda_j(B)\vert$, but can be much smaller (see, e.g., \cite[Ch.~V, Ex.~2.4]{SSu90}). 

\begin{proposition}
 Let $(\Vopt,\rhopt)$ be the solution to the TR \eqref{eq:TR.MaxProblem}, with $(A-\rhopt B)\Vopt = \Vopt\Lambda^\ast$, and let $(V,\rho)$ be the solution to the projected TR \eqref{eq:proj-tr}, restricted to the subspace $\calu = {\rm span}(U)$, where $U$ has orthonormal columns. In addition, let $H-\rho K$ be the projection of $A-\rho B$ onto $U$, i.e., $H-\rho K = U^T(A-\rho B)U$, and $\theta$ be as in \eqref{eq:sine}. 

 Finally, consider an orthonormal basis for $\mathbb R^p$, split into three parts $[V\ Q\ U_\perp]$, where the search subspace is decomposed as $U = [V\ Q]$; $Q$ spans the orthogonal complement of $V$ in $\calu$; $U_\perp$ spans the orthogonal complement of $U$ in $\mathbb R^p$. 
 
Then
\begin{equation}\label{eq:sine-VVstar}
\sin^2(V, \Vopt) \le \sin^2\theta + \left(\frac{
\eta \, \sin\theta + (\rhopt - \rho) \, \|B\| \, (1 + \sin\theta)
}{{\rm sep}(\Lambda^\ast,M)}\right)^2,
\end{equation}
where $\eta = \|U^T(A-\rhopt B)\,U_\perp\|$, $M = Q^T(A-\rho B)\,Q$, and ${\rm sep}$ is defined in \eqref{eq:sep}.
\end{proposition}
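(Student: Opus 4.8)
The plan is to adapt the Sylvester-equation argument for approximate invariant subspaces (as in \cite[Thm.~2]{stewart2001generalization} and \cite[Sec.~2]{jia2000analysis}) to the present setting, in which the shift $\rho$ differs from $\rhopt$ and must be carried along. First I would recast $\sin(V,\Vopt)$ in the frame $[V\ Q\ U_\perp]$. Decompose $\Vopt = VC + QP + U_\perp F$, with $C = V^T\Vopt$, $P = Q^T\Vopt$, $F = U_\perp^T\Vopt$, so that $\|F\| = \sin\theta$. Since $V$ and $\Vopt$ both have $k$ orthonormal columns,
\[
\sin^2(V,\Vopt) = \|(I-VV^T)\Vopt\|^2 = \|QP + U_\perp F\|^2 = \|P^TP + F^TF\| \le \|P\|^2 + \sin^2\theta,
\]
where the third equality uses $Q^TU_\perp = 0$ and the last step is the triangle inequality for the spectral norm. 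It therefore suffices to bound $\|P\| = \|Q^T\Vopt\|$ by the quantity inside the square in \eqref{eq:sine-VVstar}.

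Next I would derive a Sylvester-type equation for $P$. Left-multiplying $(A-\rhopt B)\Vopt = \Vopt\Lambda^\ast$ by $Q^T$ and inserting the decomposition of $\Vopt$ gives
\[
Q^T(A-\rhopt B)V\,C + Q^T(A-\rhopt B)Q\,P + Q^T(A-\rhopt B)U_\perp F = P\Lambda^\ast.
\]
The crucial simplification is $Q^T(A-\rho B)V = 0$: since $(V,\rho)$ is the \emph{exact} solution of the projected TR \eqref{eq:proj-tr} on $\calu$, Proposition~\ref{prop:char} applied to the projected pencil shows that ${\rm span}(V)$ is the invariant subspace of the compression $U^T(A-\rho B)U = H-\rho K$ (here $U=[V\ Q]$) associated with its $k$ largest eigenvalues, so $[V\ Q]^T(A-\rho B)[V\ Q]$ is block diagonal and its $(Q,V)$ block vanishes. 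Writing $A-\rhopt B = (A-\rho B) + (\rho-\rhopt)B$, setting $M = Q^T(A-\rho B)Q$, and using $VC + QP = UU^T\Vopt = \Vopt - U_\perp F$, the displayed identity becomes
\[
MP - P\Lambda^\ast = (\rhopt-\rho)\,Q^TB(\Vopt - U_\perp F) - Q^T(A-\rhopt B)U_\perp F.
\]

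Finally I would estimate the right-hand side and invoke ${\rm sep}$. Since $Q$ is a block of columns of the orthonormal $U$, the matrix $Q^T(A-\rhopt B)U_\perp$ is a block of rows of $U^T(A-\rhopt B)U_\perp$, so $\|Q^T(A-\rhopt B)U_\perp\| \le \eta$ and the last term is at most $\eta\sin\theta$; moreover $\|Q^TB\Vopt\| \le \|B\|$ and $\|Q^TBU_\perp F\| \le \|B\|\sin\theta$, so the first term is at most $(\rhopt-\rho)\|B\|(1+\sin\theta)$. Hence $\|MP - P\Lambda^\ast\| \le \eta\sin\theta + (\rhopt-\rho)\|B\|(1+\sin\theta)$, while the definition \eqref{eq:sep} of ${\rm sep}$, with $\Lambda^\ast$ in the $k\times k$ slot and $M$ in the complementary one, gives $\|MP - P\Lambda^\ast\| \ge {\rm sep}(\Lambda^\ast,M)\,\|P\|$. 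Combining the two estimates bounds $\|P\|$ by the ratio appearing in \eqref{eq:sine-VVstar}, which together with the first step completes the argument. The step I expect to be the main obstacle is not conceptual but careful bookkeeping: keeping the orientations in the Sylvester equation and in ${\rm sep}(\Lambda^\ast,M)$ consistent, and justifying $Q^T(A-\rho B)V = 0$ rigorously from the fact that $\rho$ is the \emph{genuine} maximum of \eqref{eq:proj-tr}---in a practical run of Algorithm~\ref{algo:traceratio-subspace} the inner solve is only approximate, so this identity then holds only up to the inner tolerance. One should also note that \eqref{eq:sine-VVstar} is informative only when ${\rm sep}(\Lambda^\ast,M) > 0$, i.e., when the wanted eigenvalues of $A-\rhopt B$ are separated from the spectrum of $M$; otherwise the bound holds vacuously and no further hypothesis is needed.
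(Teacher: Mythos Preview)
Your proposal is correct and follows essentially the same approach as the paper: decompose $\Vopt$ in the frame $[V\ Q\ U_\perp]$, use the block-diagonality of $[V\ Q]^T(A-\rho B)[V\ Q]$ (equivalently, $Q^T(A-\rho B)V=0$) to obtain a Sylvester-type relation $MP-P\Lambda^\ast=\text{(small)}$, bound the right-hand side by $\eta\sin\theta+(\rhopt-\rho)\|B\|(1+\sin\theta)$, and combine with $\sin^2(V,\Vopt)\le\|P\|^2+\sin^2\theta$. Your route is in fact slightly cleaner than the paper's: the paper first passes through the full $2\times1$ block $[V\ Q]^T(A-\rhopt B)\Vopt-[E_1^T\ E_2^T]^T\Lambda^\ast$ to reach the bound $\eta\,\|F\|$, whereas you observe directly that $Q^T(A-\rhopt B)U_\perp$ is a row sub-block of $U^T(A-\rhopt B)U_\perp$ and hence has norm at most $\eta$.
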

\begin{proof}
Our starting point is the relation $(A-\rhopt B)\,\Vopt = \Vopt\Lambda^\ast$, written in the basis $[V\ Q\ U_\perp]$:
\begin{align*}
\begin{bmatrix}
V^T \\ Q^T \\ U_\perp^T
\end{bmatrix}
(A -\rhopt B)
[V\ Q\ U_\perp]
\begin{bmatrix}
V^T \\ Q^T \\ U_\perp^T
\end{bmatrix}\Vopt = 
\begin{bmatrix}
B_{11} & B_{12} & B_{13}\\
B_{12}^T & B_{22} & B_{23}\\
B_{13}^T & B_{23}^T & B_{33}\\
\end{bmatrix}
\begin{bmatrix}
E_1 \\ E_2 \\ F
\end{bmatrix} = \begin{bmatrix}
E_1 \\ E_2 \\ F
\end{bmatrix}\Lambda^\ast.
\end{align*}
The quantity $\rho\ [V^T\ \ Q^T]^TB\,[V \ \ Q]$ is added and subtracted to exploit the fact that, by the definition of $V$ and $Q$, $[V^T\ \ Q^T]^TB\,[V \ \ Q] = {\rm diag(\Lambda, M)}$ is block diagonal, with $\Lambda$ as in the definition of the residual matrix \eqref{eq:res-mat} and $M = Q^T(A-\rho B)\,Q$. Then the first $2\times 2$ block becomes
\begin{equation}\label{eq:2by2}
\begin{bmatrix}
B_{11} & B_{12}\\
B_{12}^T & B_{22}\\
\end{bmatrix}
\begin{bmatrix}
E_1 \\ E_2
\end{bmatrix}
-
\begin{bmatrix}
E_1 \\ E_2
\end{bmatrix}\Lambda^\ast
=
\begin{bmatrix}
\Lambda E_1 - E_1\Lambda^\ast \\
M E_2 - E_2\Lambda^\ast\\
\end{bmatrix} 
+ (\rho - \rhopt)
\begin{bmatrix}
V^T \\ Q^T
\end{bmatrix}
B \,
(VE_1 + QE_2).
\end{equation}
Upper and lower bounds for \eqref{eq:2by2} can be determined using ${\rm sep}$ \eqref{eq:sep}, as follows.
\begin{align*}
\|E_2\|\ {\rm sep}(\Lambda^\ast, M) &\le \|M E_2- E_2\Lambda^\ast + (\rho-\rhopt)\,Q^TB\,(VE_1 + Q E_2)\|\\
&\qquad+ (\rhopt - \rho)\,\|Q^TB\,(VE_1 + Q E_2)\|\\
&\le \Big\|
\begin{bmatrix}
B_{11} & B_{12}\\
B_{12}^T & B_{22}\\
\end{bmatrix}
\begin{bmatrix}
E_1 \\ E_2
\end{bmatrix}
-
\begin{bmatrix}
E_1 \\ E_2
\end{bmatrix}\Lambda^\ast
\Big\| + (\rhopt - \rho)\,\|B\|\,(1 + \|F\|)\\
&\le \Big\|
\begin{bmatrix}
B_{13} \\ B_{23}
\end{bmatrix}  
\Big\| \cdot \|F\| + (\rhopt - \rho)\,\|B\|\,(1 + \|F\|).
\end{align*}
The second inequality comes from the fact $VE_1 + QE_2 = \Vopt - U_\perp F$. Let us define $\eta := \|[B_{13}^T\ B_{23}^T]^T\| = \|U^T(A-\rhopt B)\,U_\perp\|$. For any $E, F$ with equal number of columns, it holds that $\|[E^T\ F^T]^T\| \le \|E\|^2 + \|F\|^2$. Then
\begin{align*}
\Big\|\begin{bmatrix}
E_2 \\ F 
\end{bmatrix}\Big\|^2 \le \|F\|^2 + \left(\frac{
\eta \, \|F\| + (\rhopt - \rho)\,\|B\|\,(1 + \|F\|)
}{{\rm sep}(\Lambda^\ast, M)}\right)^2.
\end{align*}
It is not difficult to see that $\|[E_2^T\ F^T]^T\| = \|V_\perp^T\Vopt\| = \sin(V,\Vopt)$ and $\|F\| = \|U_\perp^T\Vopt\| = \sin(U, \Vopt)$, which concludes the proof.
\end{proof}

As for Proposition~\ref{prop:trsub-eig}, when $\rho = \rho^\ast$, the bound in \eqref{eq:sine-VVstar} is identical to the one derived in \cite{stewart2001generalization} for Ritz vectors in a standard eigenvalue problem. 
Since ${\rm sep}(M, \Lambda^\ast)$ might also approach zero, as $\theta\to 0$, this proposition does not immediately guarantee the convergence of $V$ to the solution $\Vopt$. For this reason, we need the notion of {\em uniform separation condition} (see, e.g., \cite{jia2000analysis}), i.e., we assume that there exists a $\delta^\ast > 0$, independent of $\theta$, such that ${\rm sep}(M, \Lambda^\ast) \ge \delta^\ast > 0$ for all $M$. This additional hypothesis, combined with the convergence of $\rho$ (cf.~Proposition~\ref{prop:deltarho}), is sufficient to conclude that $\sin(V,\Vopt)\to 0$ as $\theta\to 0$.

\subsection{A bound for the approximate solution in terms of the residual matrix}
Given the current approximation to the solution to TR $(V,\rho)$, the spectral norm of the corresponding residual matrix \eqref{eq:res-mat} is used as the stopping criterion in Algorithm~\ref{algo:traceratio-subspace}. In the same spirit of \cite[Thm.~6.1]{jia2000analysis}, we show that the norm of the residual matrix occurs in another upper bound for $\sin(V,\Vopt)$.
\begin{proposition}\label{prop:angle-bound}
Let $( V, \rho)$ be an approximate solution to the TR \eqref{eq:TR.MaxProblem}, with residual $R = (A - \rho B) V -  V \Lambda$, as in \eqref{eq:res-mat}. Let $(\Vopt, \rhopt)$ be the exact solution to the TR. Then 
\begin{equation}\label{eq:angle-bound}
\sin ( V, \Vopt) 
\le\frac{\|R\| + (\rhopt - \rho)\,\|(\Vopt_\perp)^T B V\|}{{\rm sep}( \Lambda, M^\ast)} \le\frac{\|R\| + (\rhopt - \rho)\,\|B\|}{{\rm sep}( \Lambda, M^\ast)},
\end{equation}
where $\Vopt_\perp$ is the orthogonal complement of $\Vopt$; $M^\ast$ is the eigenblock of $A - \rhopt B$ corresponding to $\Vopt_\perp$, and ${\rm sep}$ is defined as in \eqref{eq:sep}.
\end{proposition}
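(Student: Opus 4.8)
The plan is to mimic the proof strategy of Proposition~\ref{prop:trsub-eig} and the preceding angle bound, but now starting from the relation that $V$ (the approximate solution) satisfies, rather than from the exact relation $(A-\rhopt B)\Vopt = \Vopt\Lambda^\ast$. Concretely, I would work in the orthonormal basis $[\,\Vopt \ \ \Vopt_\perp\,]$ of $\mathbb R^p$, and decompose the columns of $V$ as $V = \Vopt\, E + \Vopt_\perp\, F$, where $E = (\Vopt)^T V$ is $k\times k$ and $F = (\Vopt_\perp)^T V$ is $(p-k)\times k$, so that $\|F\| = \sin(V,\Vopt)$ is exactly the quantity to be bounded.

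First I would rewrite the approximate eigenrelation $(A-\rho B)V = V\Lambda + R$. Substituting $A - \rho B = (A-\rhopt B) + (\rhopt - \rho)B$ and expanding $V$ in the $[\,\Vopt \ \ \Vopt_\perp\,]$ basis, I would left-multiply by $(\Vopt_\perp)^T$ to project onto the complementary block. Using $(A-\rhopt B)\Vopt = \Vopt\Lambda^\ast$ and $(A-\rhopt B)\Vopt_\perp = \Vopt_\perp M^\ast$ (the eigenblock decomposition), the $(A-\rhopt B)$ part contributes $M^\ast F$ on the left, while the right-hand side contributes $F\Lambda$ from $V\Lambda$ plus $(\Vopt_\perp)^T R$. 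Rearranging gives an equation of Sylvester type, $M^\ast F - F\Lambda = (\Vopt_\perp)^T R - (\rhopt - \rho)(\Vopt_\perp)^T B V$. Taking norms and invoking the definition of ${\rm sep}$ in \eqref{eq:sep} — which lower-bounds $\|M^\ast F - F\Lambda\|$ by ${\rm sep}(\Lambda, M^\ast)\,\|F\|$ — yields $\|F\|\,{\rm sep}(\Lambda,M^\ast) \le \|(\Vopt_\perp)^T R\| + (\rhopt-\rho)\,\|(\Vopt_\perp)^T B V\| \le \|R\| + (\rhopt-\rho)\,\|(\Vopt_\perp)^T B V\|$. The final, looser bound follows from $\|(\Vopt_\perp)^T B V\| \le \|B\|$ since $\Vopt_\perp$ and $V$ have orthonormal columns.

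The one subtlety I would be careful about is the orientation of the arguments of ${\rm sep}$: as defined in \eqref{eq:sep}, ${\rm sep}(A,B) = \min_{\|P\|=1}\|PA - BP\|$ with $A$ the $k\times k$ block and $B$ the $(p-k)\times(p-k)$ block, so I must match the Sylvester equation $M^\ast F - F\Lambda$ to ${\rm sep}(\Lambda, M^\ast)$ (here $P = F^T$ and the roles of the two matrices are as in the definition, using that ${\rm sep}$ is symmetric under transposition of the equation). This is exactly the convention used in \cite[Thm.~6.1]{jia2000analysis} and in the preceding proposition, so it should align cleanly; I just need to state it carefully. The main obstacle — though it is more a matter of bookkeeping than of depth — is verifying that $\Lambda$ (which, as noted after Algorithm~\ref{algo:traceratio-subspace}, need not be diagonal when $\rho$ is only an approximate projected trace ratio value) still behaves correctly in the ${\rm sep}$ estimate; but since ${\rm sep}$ is defined for arbitrary square matrices, not just diagonal ones, no diagonalizability of $\Lambda$ or $M^\ast$ is needed, and the argument goes through verbatim. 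The result is purely a perturbation-theoretic identity plus the ${\rm sep}$ lower bound, with no appeal to $\theta\to 0$.
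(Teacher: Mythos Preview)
Your proposal is correct and follows essentially the same approach as the paper: project the approximate eigenrelation onto $\Vopt_\perp$, use the block structure of $A-\rhopt B$ to obtain the Sylvester-type identity $M^\ast F - F\Lambda = (\Vopt_\perp)^T R - (\rhopt-\rho)(\Vopt_\perp)^T BV$, and apply the ${\rm sep}$ lower bound. The paper's proof is simply a terser version of what you wrote, without introducing the $E,F$ notation explicitly and without the side remarks on the ${\rm sep}$ orientation or the non-diagonality of $\Lambda$.
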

\begin{proof}
By the definitions of $R$ and $\Vopt_\perp$, the projection of $R$ onto $\Vopt_\perp$ can be rewritten as
\[
(\Vopt_\perp)^TR = M^\ast(\Vopt_\perp)^T  V - (\Vopt_\perp)^T  V\Lambda + (\rhopt - \rho)\,(\Vopt_\perp)^TB V.
\]
Therefore
\[
{\rm sep}(\Lambda, M^\ast)\,\|(\Vopt_\perp)^T  V\| \le \|(\Vopt_\perp)^TR\| + (\rhopt - \rho)\,\|(\Vopt_\perp)^T B V\| \le \|R\| + (\rhopt - \rho)\,\|B\|. 
\]
\end{proof}

This proposition suggests that reasonable stopping criteria for TR could be based on either the norm of the residual matrix, or the difference between consecutive values of $\rho$. Nevertheless, if there is a small gap between the $k$th and the $(k+1)$st eigenvalue of $A-\rhopt B$, the denominator in \eqref{eq:angle-bound} may become quite small, making the bound useless.

The behavior of $\rhopt - \rho$ and $\|R\|$ is illustrated in the following example. 

\begin{example}\label{ex:angle-bound}
\rm Let $B \in \mathbb R^{p\times p}$ be a random SPD matrix, and let $A = H_AH_A^T \in \mathbb R^{p\times p}$, where $H_A$ is $p\times (k+1)$ with uniformly distributed entries in $(0,1)$; set $p = 100$ and $k = 5$. In addition, $H_A$ is centered so that its columns have zero mean. Algorithm~\ref{algo:traceratio-subspace} is run with $\kmin = 10$ and $\kmax = 25$. The iterations stop when $\|R\| \le 10^{-6}$. The solution $(\Vopt, \rhopt)$ is provided by the output of Algorithm~\ref{algo:trsub-traceratio}, with tolerance $10^{-8}$. 
\begin{figure}[htb!]
\centering
\includegraphics[width=0.7\textwidth]{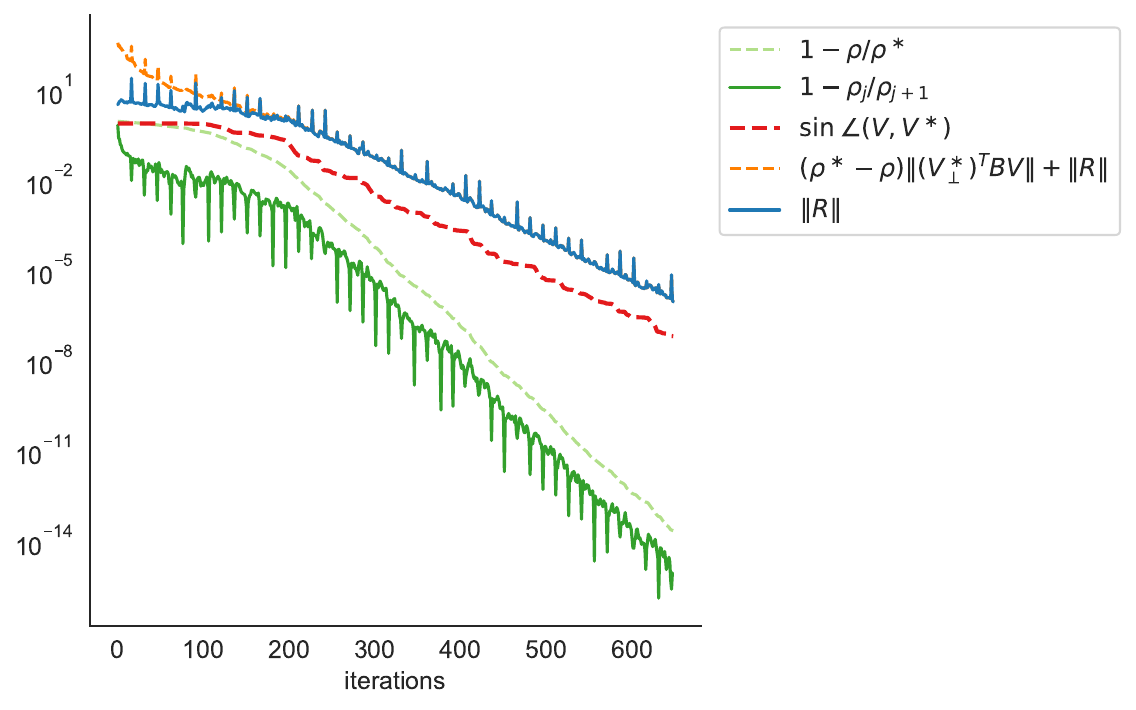}
\caption{Numerical and theoretical quantities to check the convergence of the approximate solution to TR to the exact solution.}
\label{fig:res-analysis}
\end{figure}
Figure~\ref{fig:res-analysis} shows the values of $\sin ( V, \Vopt)$ in the iterations, along with the numerator of the tighter bound in \eqref{eq:angle-bound}, and the relative error in estimating $\rhopt$. In a usual context, these quantities would not be available; therefore, they are also compared with two computable quantities, which are the spectral norm of the residual matrix, and the relative difference between two consecutive trace ratio values. 

The quantities depending solely on the trace ratio values seem to decay faster than $\sin ( V, \Vopt)$, meaning that a stopping criterion based on the former would give an unsatisfactory solution. The numerator in the bound \eqref{eq:angle-bound} approaches $\|R\|$ in a few iterations, suggesting that the norm of the residual is the dominant term. Both quantities follow $\sin ( V, \Vopt)$ with the same slope.
\end{example}

Motivated by the example, we further analyze the term $\rhopt - \rho$. This quantity can be mostly explained by $\|R\|^2$. Let $\beta^\ast = \tr((\Vopt)^T B \Vopt)$. It follows that
\begin{equation*}
(\rhopt - \rho)\,\beta^\ast = \tr((V^\ast)^TAV^\ast) - \rho\,\tr((V^\ast)^TBV^\ast) = \tr((V^\ast)^T(A-\rho\,B)\,V^\ast).    
\end{equation*}
From Courant--Fischer Theorem (see, e.g., \cite[Prop.~4.3]{ngo2012trace} for a similar application of the result) for the matrices $A-\rho\,B$ at the numerator and $B$ at the denominator, combined with the fact that $\sum_{i = 1}^k \lambda_i = 0$, one obtains
\[
0 \le\rhopt - \rho\le \frac{\sum_{i = 1}^k (\lambda_i(A-\rho B) - \lambda_i)}{\sum_{i=1}^k\lambda_{p-k+i}(B)}.
\]
Additionally, from the decomposition of $\Vopt$ as $\Vopt = V E + V_\perp F$,
\begin{align*}
(\rhopt - \rho)\,\beta^\ast
& = \tr\left((V^*)^T [V \ V_\perp][V \ V_\perp]^T(A-\rho B)[V \ V_\perp] [V \ V_\perp]^T V^* \right) = \tr\Big(
\smtxa{cc}{E^T & \!\! F^T}
\smtxa{cc}{\Lambda & \bar R^T \\ \bar R & \bar M}
\smtxa{c}{E \\ F}
\Big),
\end{align*} 
where $\bar R = V_\perp^T(A-\rho B) V$, $\bar M = V_\perp^T (A-\rho B) V_\perp$, and $\|\bar R\| = \| R\|$. 
Now let $\mu_i$ be the $i$th largest eigenvalue of the block diagonal matrix ${\rm diag}(\Lambda, \bar M)$. \cite[Cor.~1]{mathias1998quadratic} shows that $\vert \lambda_i(A-\rho B) - \mu_i \vert \le \delta^{-1}\,\|R\|^2$, where $\delta := \min_{i,j}\vert\lambda_i(\Lambda) - \lambda_j(\bar M)\vert$ measures the separation between the spectra of $\Lambda$ and $\bar M$. (This bound is enough for our purpose; a better and more recent result can be found in \cite{li2005note}). Then
\begin{align*}
\sum_{i = 1}^k \, (\lambda_i(A-\rho B) - \lambda_i) &\le \sum_{i = 1}^k \, (\,\vert\lambda_i - \mu_i \vert + \vert\lambda_i(A-\rho B) - \mu_i  \vert\,) \\
&\le \sum_{i = 1}^k \, \vert\lambda_i - \mu_i \vert + k\,\delta^{-1}\,\|R\|^2.
\end{align*}

This bound does not depend on $(\Vopt, \rhopt)$. The open question remains whether one can determine a bound for $\sum_{i = 1}^k \vert\mu_i - \lambda_i\vert$ in terms of $\|R\|$. Given the choices made for the expansion and the extraction phase, it is reasonable to expect that, as $\|R\|\to 0$, the eigenvalues of $\Lambda$ will coincide with the largest eigenvalues of ${\rm diag}(\Lambda, M)$. However, there might be some rare situations where $V$ fails to converge to the subspace spanned by the eigenvectors of $A-\rhopt B$, corresponding to the $k$ largest eigenvalues. The following example shows what can be observed in a typical case.
 
\begin{example}[Continuation of Example~\ref{ex:angle-bound}]
\rm The left plot of Figure~\ref{fig:eig-analysis} shows the behavior of $(\rhopt - \rho)\,\beta^\ast$. In this example, the squared norm of the residual seems to be an appropriate upper bound for this quantity. We also plot $\sum_{i = 1}^k \vert\mu_i - \lambda_i\vert$, which drops after a few iterations. At the same time, the eigenvalues of $H-\rho K$ approach the $k$ largest eigenvalues of $A-\rho B$. This can be seen in the right plot of Figure~\ref{fig:eig-analysis}.
\begin{figure}[htb!]
\centering
\includegraphics[width=\textwidth]{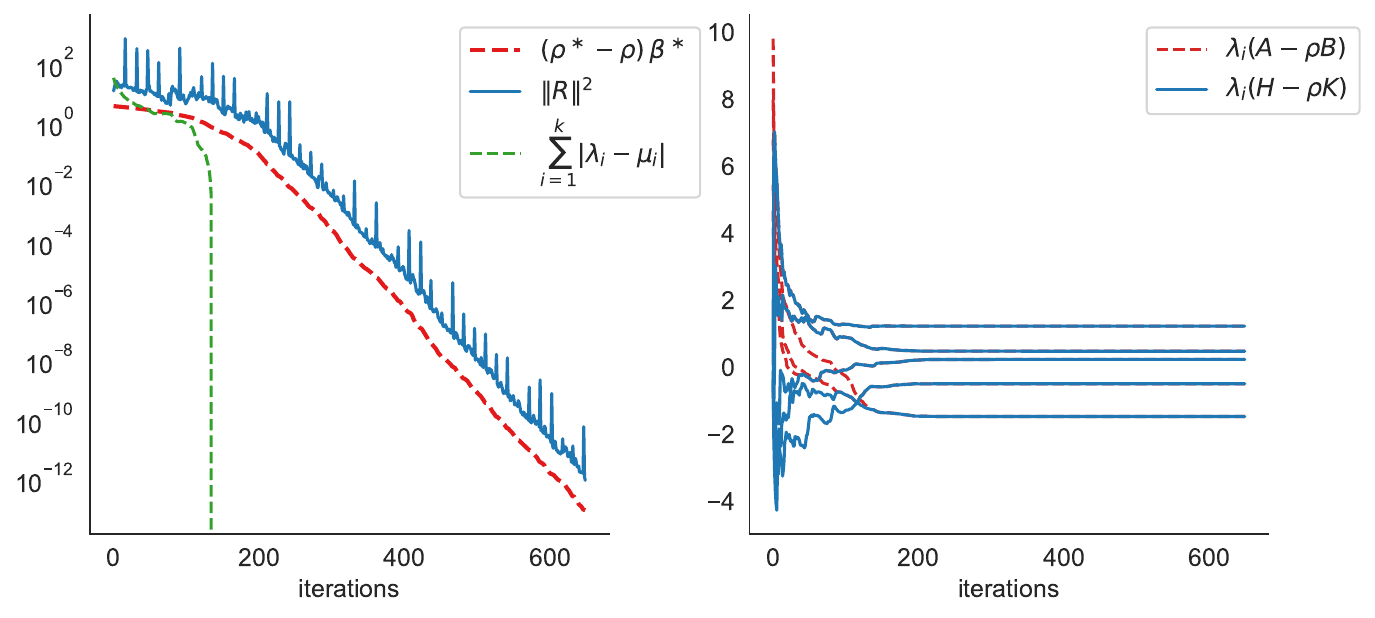}
\caption{On the left: $(\rho^\star - \rho)\beta^\ast$ is bounded by the squared norm of the residual matrix. On the right: the approximate eigenvalues $\lambda_i(H-\rho K)$ converge to the $k$ largest eigenvalues of $A-\rho B$ throughout the iterations.}
\label{fig:eig-analysis}
\end{figure}
\end{example}

\section{Classification task}\label{sec:classification}
Our experiments aim to compare the performance of Algorithm~\ref{algo:trsub-traceratio} with that of Algorithm~\ref{algo:traceratio-subspace}, and FDA, in the context of classification problems. 
We describe multigroup classification and introduce the relevant quantities involved in TR and FDA. 
In view of the numerical experiments of Section~\ref{sec:trsub-exp}, some practical aspects are discussed, including the implementation of a subspace method for FDA.

Suppose that a population is described by the random vector $\bX= (X_1, \dots,X_p)^T$, with mean $\expec(\bX) = \bmu$ and covariance $\var(\bX) = \Sigma$. It is common to assume that $\Sigma$ and its estimator are nonsingular.
The population is divided into $g$ groups (or classes), modeled by the class variable $Y \in \{1, \dots,g\}$. Denote the expectation of $\bX_i = (\bX\mid Y = i)$ by $\expec(\bX_i) = \bmu_i$, and its covariance matrix by $\var(\bX_i) = \Sigma_i$. The a priori probability of being in group $i$ is $p_i = P(Y=i)$. 

A new observation $\bx \in \mathbb{R}^{p}$ is assigned to one of the $g$ classes, via a so-called \emph{classification rule}, that is built on top of the available data. Let $\bx_{i1}, \dots, \bx_{in_i}$ represent $n_i$ realizations of a random sample of $\bX_i$; the total sample size is $n = \sum_{i=1}^g n_i$. Data are stored row-wise, in an $n\times p$ matrix $X$. The sample mean and the sample covariance matrix of group $i$ are $\bxbar_i = n_i^{-1} \, \sum_{h=1}^{n_i}{\bx_{ih}}$ and $S_i = n_i^{-1} \, \sum_{h=1}^{n_i} \, (\bx_{ih}-\bxbar_i) \, (\bx_{ih}-\bxbar_i)^T$, respectively.
The overall sample mean is estimated by $\bxbar = \sum_{i=1}^g \tfrac{n_i}{n} \, \bxbar_i$. 
The between-class scatter matrix, $S_B$, and the within-class scatter matrix, $S_W$ are defined as follows:
\begin{equation} \label{eq:SbSw}
S_B = n^{-1} \, \sum_{i=1}^g n_i \, (\bxbar_i-\bxbar) \, (\bxbar_i-\bxbar)^T, \qquad
S_W = n^{-1} \, \sum_{i=1}^g \sum_{h=1}^{n_i} \ (\bx_{ih}-\bxbar_i) \, (\bx_{ih}-\bxbar_i)^T.
\end{equation}
From the definition of the overall mean and the group means, $\rank(S_B) \le \min\{p, g-1\}$, while $\rank(S_W) \le \min\{p, n-g\}$. Both TR and FDA provide a subspace that minimizes the distance within the projected groups while maximizing the distance between the projected groups. Therefore the matrices $A$ and $B$ in \eqref{eq:TR.MaxProblem} for TR, and in \eqref{eq:gep} for FDA, are chosen as $S_B$ and $S_W$ respectively. 

In subspace methods, it is sufficient to know how the scatter matrices act on a vector, without storing them. The sum of the two scatter matrices gives the total sample covariance matrix:
\begin{equation}\label{eq:sumscatter}
S_T = n^{-1}\, \sum_{i=1}^g \sum_{h=1}^{n_i} \, (\bx_{ih}-\bxbar) \, (\bx_{ih}-\bxbar)^T = S_W + S_B. 
\end{equation}
Simple computations also show that $S_T = n^{-1} X^TX - \bxbar\,\bxbar^T$. In addition, the covariance matrices \eqref{eq:SbSw} can be written as the product of the following matrices (see, e.g., \cite{zhang2010fast}): 
\begin{align}
H_B &= [\sqrt{n_1}\,(\bxbar_1 - \bxbar),\,\dots,\,\sqrt{n_g}\,(\bxbar_g - \bxbar)] \in \mathbb R^{p\times g}\label{eq:deco}\\
H_W &= [\bx_{ih} - \bxbar_{i}\mid i=1,\dots,g\ \text{and}\ h = 1,\dots,n_i] \in \mathbb R^{p\times n}.\nonumber
\end{align}
Then $S_B = H_BH_B^T$ and $S_W = H_WH_W^T$. In practice, we use the decomposition of $S_B$, and compute $S_W = S_T - S_B$, which is more convenient. Then, apart from the data matrix, one would only need to store the group means and the overall mean to obtain the matrix-vector products $S_B\bv$ and $S_W\bv$, for $\bv\in \mathbb R^p$. The second product is the most expensive and amounts to $\calo(pn)$ operations; the first one requires $\calo(gp)$ flops. However, forming and storing $S_W$ costs $\calo(p^2n)$, and therefore it might still be more convenient to compute $H_W(H_W^T\bv)$ at each iteration. 


\subsection{A Davidson type method for FDA}
\label{sec:trsub-gep}
The trace ratio method is often compared to FDA in the context of multigroup classification. Therefore, we have implemented a Davidson type method to find the largest generalized eigenvalues of $(A,B)$ and their corresponding eigenvectors. This is described in Algorithm~\ref{algo:gep-davidson}.

Davidson type methods for the generalized eigenvalue problem can be found in the literature in, e.g., \cite{morgan1990davidson,morgan1991computing,romero2014parallel}. In \cite{morgan1991computing}, Davidson's method is used for the computation of harmonic Ritz values. As for the basis of the search subspace, there are two possibilities: choosing a basis with orthonormal or $B$-orthogonal columns. The extraction phase changes accordingly. The first case requires the solution to a projected generalized eigenvalue problem. In the second one, the $B$-orthogonal basis transforms the projected problem into a standard eigenvalue problem (see, e.g., \cite[Alg.~2]{romero2014parallel}). The first option is preferable, because it is generally favorable for the stability of the method, particularly for an ill-conditioned $B$. This choice is also suggested in \cite{morgan1990davidson,morgan1991computing} and \cite[Alg.~3]{romero2014parallel}. Note that, with a search space spanned by orthonormal vectors, the approximate eigenvectors are automatically $B$-orthogonal. 

\begin{algorithm}
\caption{A Davidson type method for FDA}
\label{algo:gep-davidson}
{\bf Input:} Symmetric $A \in \R^{p \times p}$, SPD $B \in \R^{p \times p}$, $k < p$,
maximum and minimum dimensions with $k \le \kmin < \kmax \le p$, block size $1\le\bsize\le k$, initial $p \times \kmin$ matrix $U_\kmin$ with orthogonal columns, tolerance {\sf tol}.\\
{\bf Output:} $p \times k$ matrix $V$ with orthonormal columns maximizing the trace ratio. \\
\begin{tabular}{ll}
{\footnotesize 1:} & Compute $AU_\kmin$, $BU_\kmin$ and $H_\kmin = U_\kmin^T\!AU_\kmin$, $K_\kmin = U_\kmin^T\!BU_\kmin$; $j = \kmin$\\
{\footnotesize 2:}& {\bf for} $i = 1, 2, \dots$ \\
{\footnotesize 3:} & \phantom{M} Solve GEP for $(H_j, K_j)$, store $k$ leading eigenvectors in $Z$, eigenvalues in $\Lambda$\\
{\footnotesize 4:} & \phantom{M} $V = U_jX$, \ $R=AV- BV\Lambda = AU_jZ- BU_jZ\Lambda$\\
{\footnotesize 5:} & \phantom{M} Compute $U_\bsize = [\bu_{j+1},\dots,\bu_{j+\bsize}]$ largest left singular vectors of $R$\\
{\footnotesize 6:} & \phantom{M} {\bf if} $\|R\| < {\sf tol}$,  \ {\bf return}, \ {\bf end} \\
{\footnotesize 7:} & \phantom{M} {\bf if}  $j = \kmax$:\\
{\footnotesize 8:} & \phantom{MM} Store $\kmin$ leading generalized eigenvectors of $(H_\kmax, K_\kmax)$ in $Z_\kmin = [Z\ Z_{\kmin-k}]$\\ 
{\footnotesize 9:} & \phantom{MM} Orthogonalize $Z_\kmin = Q_\kmin S_\kmin$\\
{\footnotesize 10:} & \phantom{MM} Set $j = \kmin$, shrink $U_j = U_\kmax Q_\kmin$,  $AU_j = AU_\kmax Q_\kmin$, $BU_j = BU_\kmax Q_\kmin$\\
 & \phantom{MMm} $H_j = Q_\kmin^T\!H_\kmax Q_\kmin$, $K_j = Q_\kmin^T\!K_\kmax Q_\kmin$ \\
{\footnotesize 11:} & \phantom{M} {\bf end}\\
{\footnotesize 12:} & \phantom{M} Reorthogonalize $U_\bsize$ against $U_j$, expand $U_{j+\bsize} = [U_j\ \ U_\bsize]$\\
{\footnotesize 13:} & \phantom{M} Compute $A\,U_\bsize$, $B\,U_\bsize$ and $U_{j+\bsize}^T\!AU_\bsize$, $U_{j+\bsize}^T\!BU_\bsize$ \\
{\footnotesize 14:} & \phantom{M} $j = j + \bsize$\\
{\footnotesize 15:} & {\bf end}
\end{tabular} 
\end{algorithm}

In the expansion phase, we maintain the choice made for TR and use the $1\le\bsize\le k$ largest left singular vectors of the residual matrix. Reorthogonalization is also performed, as in Section~\ref{sec:expa}.

The restarting procedure consists of using the ${\sf m_1}$ leading approximate generalized eigenvectors, extracted from the subspace $\calu_{\kmax}$. This is similar to what has been done in \cite{morgan1990davidson}, but also analogous to the restarting procedure in the Krylov--Schur method \cite{stewart2002krylov}.

Following a similar reasoning as in Section~\ref{sec:restart}, we show that it is unnecessary to perform the extraction phase immediately after restarting, to get a new residual matrix. First, the projected generalized eigenvalue problem for the first $k$ eigenvectors is also an optimization problem of the form \eqref{eq:gep}. As in Proposition~\ref{prop:mon-rho}, if $\calu_{\kmin}\subset\calu_{\kmax}$, it holds
\begin{equation}\label{eq:inclusion-gep}
\max_{V\,\subset\,\calu_{\kmin},\,V^TBV = I_k}\tr(V^T\!AV) \le \max_{V\,\subset\,\calu_{\kmax},\,V^TBV = I_k}\tr(V^T\!AV).
\end{equation}
Our restarting procedure attains equality in \eqref{eq:inclusion-gep}. Suppose the current search subspace is spanned by $U_\kmax\in \mathbb R^{p\times \kmax}$, and let $(H_\kmax, K_\kmax)$ be the corresponding projected $(A, B)$. Let $Z_\kmin = [Z\ Z_{\kmin - k}]$ contain the $\kmin$ largest generalized eigenvectors of $(H_\kmax, K_\kmax)$, where $Z$ is $\kmax\times k$, $Z_{\kmin-k}$ is $\kmax\times (\kmin - k)$; consider its QR decomposition $Z_\kmin = Q_\kmin S_\kmin = Q_\kmin[S_k \ S_{\kmin - k}]$, where $Q_\kmin$ is $\kmax\times\kmin$ with orthonormal columns, $S_\kmin$ is $\kmin\times\kmin$ and upper triangular, and $S_k$ is the first $\kmin\times k$ block of $S_\kmin$. 
The new search subspace $\calu_\kmin$ is defined as the span of $U_{\kmin} := U_\kmax Q_\kmin$, which has orthogonal columns. 

The extraction phase for $U_\kmin$ is 
\begin{align}
\label{eq:proj-gep}
\max_{V\,\subset\,\calu_{\kmin},\,V^TBV = I_k}\tr(V^T\!AV) = \max_{\substack{V = U_\kmin  C,\\C^TU_\kmin^TBU_\kmin C = I_k}}\tr(C^TU_{\kmin}^TAU_\kmin C),
\end{align}
where
\begin{align*}
I_k &= C^TU_\kmin^TBU_\kmin C = C^T Q_\kmin^T U_\kmax^T B U_\kmax Q_\kmin C \\ 
&= C^T S_\kmin^{-T} Z_\kmin^T K_\kmax Z_\kmin S_\kmin^{-1}C = C^TS_\kmin^{-T}S_\kmin^{-1}C,
\end{align*}
since $Q_\kmin = Z_\kmin S_\kmin^{-1}$, and $Z_\kmin$ has $K_\kmax$-orthogonal columns. In addition $Z_\kmin^TH_\kmax Z_\kmin = \Lambda_\kmin$, where $\Lambda_\kmin$ contains the first $\kmin$ generalized eigenvalues of $(H_\kmax,K_\kmax)$. Therefore $C^TU_{\kmin}^TAU_\kmin C = C^TS_\kmin^{-T}\Lambda_\kmin S_\kmin^{-1}C$. This establishes the equivalence between \eqref{eq:proj-gep} and
\[
\max_{C^TC = I_k}\tr(C^T\Lambda_\kmin C),
\]
whose maximizer is $C = [I_k\ 0]^T$. The maximum corresponds to the sum of the $k$ largest generalized eigenvalues of $(H_\kmax, K_\kmax)$. This means that $U_\kmin C$ achieves equality in \eqref{eq:inclusion-gep}. In addition, since $V = U_{\kmin}C = U_\kmax Q_\kmin S_k$, the residual matrix related to the new iteration can be computed by using the old quantities, without performing the extraction phase \eqref{eq:proj-gep}.

\subsection{Regularization in the large-scale setting}\label{sec:trsub-reg}
The within-scatter matrix may be numerically singular. In view of Proposition~\ref{prop:rank}, this might lead to an infinite trace ratio for TR, and to a so-called singular pencil for the GEP, which is known to be challenging to solve. One possible solution would be to cut out the nullspace of $S_W$ and solve either TR or FDA in the range of $S_W$. This has been done in, e.g., \cite{ngo2012trace}. 
However, in the large-scale setting, performing a full eigenvalue decomposition to find the range of $S_W$ is undesirable (and possibly unfeasible). Therefore we regularize $S_W$ and consider the TR (FDA) for $(S_B, (1-\alpha)S_W + \alpha I_p)$, with $\alpha\in (0, 1]$. This idea was originally proposed by \cite{friedman1989regularized} in the context of quadratic discriminant analysis. 

It is easy to show that this problem is equivalent to TR (FDA) for $(S_B, \,S_W + \frac{\alpha}{1-\alpha}I_p)$, with $\frac{\alpha}{1-\alpha} \in [0,\infty)$. The latter formulation justifies the omission of the multiplicative factor $\tr(S_W)/p$, which is present in \cite[Eq.~(18)]{friedman1989regularized}. 
There is no easy way of choosing the right amount of regularization. One may select $\alpha$ such that the subspace $V(\alpha)$ leads to the highest performance metric of interest in the classification task. 

A regularization of TR has already been proposed in \cite{zhang2010fast}, for high-dimensional problems, i.e., situations where the number of data points is much smaller than the dimension of the problem ($n \ll p$), and the data points are linearly independent. 
In this scenario, \cite[Thm.~4.2]{zhang2010fast} shows the equivalence between the solution to a regularized TR and a trace ratio method preceded by the QR decomposition of the data matrix $X$. In the high-dimensional setting, a QR step should be quite cheap, given that $n \ll p$, and leads to a smaller TR problem. 
In this case, QR combined with TR would be a better strategy than exploiting subspace methods. 

\subsection{Classification rule}\label{sec:rule} 
TR and FDA are linear dimensionality reduction methods, i.e., they provide a subspace of a given dimension, which should maximize the separation between the groups in the data matrix. The quality of the solution is usually assessed on the capability of the model to correctly classify new observations, given the projected data. 

In the experiments, TR and FDA solutions are evaluated in terms of classification accuracy, which is the proportion of data points that have been correctly classified. 
Here the classification rule is linear discriminant analysis (LDA, see, e.g., \cite[Sec.~4.3]{Hastie2009StatLearning}). A new observation is first projected onto the subspace spanned by either the solution to TR or FDA, indicated by $V$; then it is classified to the group with the closest projected centroid, based on the Mahalanobis distance and the number of samples in each group. In other words, if $\bx$ is a new observation, then it is assigned to group $j$ if
\[
j = \argmin_{i}\|V^T(\bx-\bxbar_i)\|^2_{(V^TS_{\rm pooled}V)^{-1}} - 2\log\tfrac{n_i}{n},
\]
where $V^TS_{\rm pooled}V = \frac{n}{n-g}V^TS_WV$. Following the linear dimensionality reduction step, other classifiers may be chosen for assigning the new observations. However, a deeper study on this matter is out of the scope of the paper.

\section{Experiments}\label{sec:trsub-exp}
We compare the two versions of TR (cf.~Algorithm~\ref{algo:trsub-traceratio} and Algorithm~\ref{algo:traceratio-subspace}) and the subspace method for FDA (cf. Algorithm~\ref{algo:gep-davidson}). The methods are evaluated in terms of number of matrix-vector (MV) products and computational time (in seconds). The quality of their solution is assessed in terms of classification accuracy, by implementing the classification rule in Section~\ref{sec:rule}. 
As Algorithm~\ref{algo:trsub-traceratio} and Algorithm~\ref{algo:traceratio-subspace} yield approximately the same solution, the comparison of their accuracy acts mainly as a sanity check. 

Both synthetic data and real datasets are considered, with $n\gg p$, for the reasons remarked at the end of Section~\ref{sec:trsub-reg}. Unless otherwise specified, the block size is $\bsize = 1$.

\subsection{Synthetic data}

The synthetic example is taken from \cite{ortner2020robust}, with $g$ groups, $g$ relevant features, and $q > 0$ other irrelevant variables. 
The input variables of the $i$th class are $\bX_i \sim \mathcal{N}_{g+q}(2\be_i, \Sigma)$, for $i = 1,\dots,g$, where $\be_i$ is the $i$th vector of the canonical basis of $\mathbb R^{g+q}$, and the covariance matrix is
\[
\Sigma = 
 \begin{bmatrix}
 1   & 0.1 & \cdots & 0.1 &\\
 0.1 & 1   & 0.1 & \vdots & \\
 0.1 & 0.1 & \ddots   & 0.1 &\\
 0.1 & \cdots & 0.1 &   1 &\\
     &     &     &     & I_q
 \end{bmatrix}
\]
for all groups. In other words, $\Sigma$ is a block diagonal matrix, where the first $g\times g$ block has ones on the main diagonal, and 0.1 as off-diagonal elements. The second block is a $q\times q$ identity matrix. The remaining elements are zeros. In our experiments, there are $g = 3$ groups, $q = 5000$ irrelevant variables, $n_i = 50000$ training data points per group, and $n_i = 1000$ test data points per group. We run $50$ random experiments, where $(S_B, S_W)$ are estimated according to \eqref{eq:SbSw}, using the training data. In both TR and FDA, data are projected onto a subspace of dimension $k = 2$. TR in Algorithm~\ref{algo:trsub-traceratio} is indicated as \trks{}, given that the eigenvalues in the inner iterations are computed with the Krylov--Schur method. 
Davidson's methods for TR and FDA are indicated as \trsub{} and \fdasub{}, respectively. In all methods, the algorithms stop when $10^5$ (outer) iterations are reached. The stopping criterion is based on the spectral norm of the residual (see Section~\ref{sec:expa}): $\|R\|<{\sf tol}$, with ${\sf tol} = 10^{-6}$. For the Krylov--Schur method, \trsub{} and \fdasub{}, the minimum size of the search subspace is $\kmin = 2k = 4$, and the maximum size is $\kmax = 4k = 8$.

In this first experiment, we also test two different ways of performing the matrix-vector multiplication by the within covariance matrix. While the product with $S_B$ is always computed via the decomposition \eqref{eq:deco}, in one case, the within-scatter matrix $S_W$ \eqref{eq:SbSw} is precomputed and stored. Alternatively, the relations \eqref{eq:sumscatter} and \eqref{eq:deco} are exploited to decompose both $S_B$ and $S_W$, with very little extra storage. For a fair comparison of the two techniques, the computational time for precomputing the scatter matrices is also taken into account.

Table~\ref{tab:ort-clean} shows the result of the simulations. All methods converge within $10^5$ iterations in all experiments. \trsub{} and \trks{} converge to the same trace ratio value $\rhopt$, meaning that \trsub{} also reaches the global optimum of \eqref{eq:TR.MaxProblem}. This behavior is observed in the following experiments as well (cf.~Tables~\ref{tab:fashion} and \ref{tab:german}). Indeed, as pointed out in Section~\ref{sec:expa}, it is unlikely that \trsub{} fails to converge to the desired solution. 

In this example, all methods give the same accuracy, even if, in general, TR and FDA provide different solutions. In particular, while the projection matrix of TR has orthonormal columns, the columns of the FDA solution are $S_W$-orthogonal. 

\fdasub{} and \trsub{} show very similar performances, in terms of both average matrix-vector products and average computational time. \trks{} requires more matrix-vector products and computational time. Because of this, the method benefits from the precomputation of the within covariance matrix. On the other hand, when fewer matrix-vector products are required (i.e., in \trsub{} and \fdasub{}), it is more convenient to compute the matrix-vector products by decomposing the scatter matrices.

\begin{table}
\centering
\caption{
Results for the synthetic data, over $50$ experiments. Average (Avg) and standard deviation (Sd) of matrix-vector (MV) products, time (in seconds, $s$), and accuracy are reported. For TR, we record the final trace ratio value $\rhopt$ and the corresponding eigengap $\lambda_k(A-\rho^\ast B) - \lambda_{k+1}(A-\rho^\ast B)$. It is also indicated whether the covariance matrices were precomputed and stored, or not.}
\vspace{1mm}
\footnotesize
\begin{tabular}{lclcccccccccc}
\toprule
Precomputed & $k$ & Method & \multicolumn{2}{c}{MV} & \multicolumn{2}{c}{Time ($s$)} & \multicolumn{2}{c}{Accuracy} & \multicolumn{2}{c}{$\rho^\ast$} & \multicolumn{2}{c}{Eigengap} \\
 &  &  & Avg & Sd & Avg & Sd & Avg & Sd & Avg & Sd & Avg & Sd \\
\midrule
No  & 2 & FDA subspace  & 23 & 0 & \ph{1}9.4  & 0.1 & 0.85 & $< 0.01$ &                                       \\
No  & 2 & TR KSchur     & 70 & 0 & 27.1 & 0.4 & 0.85 & $< 0.01$ & 1.42 & $< 0.01$ & 0.94 & $< 0.01$     \\ \vspace{2mm}
No  & 2 & TR subspace   & 25 & 0 & 10.0 & 0.1 & 0.85 & $< 0.01$ & 1.42 & $< 0.01$ & 0.94 & $< 0.01$     \\
Yes & 2 & FDA subspace  & 23 & 0 & 14.9 & 0.2 & 0.85 & $< 0.01$ &                                       \\
Yes & 2 & TR KSchur     & 70 & 0 & 15.2 & 0.2 & 0.85 & $< 0.01$ & 1.42 & $< 0.01$ & 0.94 & $< 0.01$     \\
Yes & 2 & TR subspace   & 25 & 0 & 14.9 & 0.2 & 0.85 & $< 0.01$ & 1.42 & $< 0.01$ & 0.94 & $< 0.01$     \\
\bottomrule
\end{tabular}
\label{tab:ort-clean}
\end{table}

\subsection{Real datasets} 
In what follows, we consider two real datasets to highlight problem-dependent behaviors of \trsub{}, \trks{}, and \fdasub{}, for different levels of regularization $\alpha$ and reduced dimension $k$.

\paragraph{Fashion MNIST \cite{fashionmnist}} The dataset consists of $n = 60000$ gray-scale images of fashion articles, with $p = 784$ pixels. The pixels have been scaled by $255$, so each feature ranges in $[0,1]$. The number of classes is $g = 10$. All classes are equally represented. Data are projected onto a subspace of size $k = 9$ and then new items are classified with the LDA classifier. The classification accuracy is assessed via a 10-fold cross-validation. 
For all the methods \trsub{}, \trks{}, and \fdasub{}, the size of the search subspace varies between $\kmin = 2k = 18$ and $\kmax = 5k = 45$. Here we show how regularization affects the various methods, by setting $\alpha = 0$ (i.e., no regularization) and $\alpha = 0.1$. Results are shown in Table~\ref{tab:fashion}.

\begin{table}[ht!]
\centering
\caption{
Results of the 10-fold cross-validation on {\sf Fashion MNIST}. Average (Avg) and standard deviation (Sd) of matrix-vector (MV) products, time (in seconds, $s$), and accuracy are reported. For TR, we record the final trace ratio value $\rhopt$ and the corresponding eigengap $\lambda_k(A-\rho^\ast B) - \lambda_{k+1}(A-\rho^\ast B)$. Different levels of regularization $\alpha$ are considered.}
\vspace{1mm}
\footnotesize
\begin{tabular}{lclccccccccccc}
\toprule
$\alpha$ & $k$ & Method & \multicolumn{2}{c}{MV} & \multicolumn{2}{c}{Time ($s$)} & \multicolumn{2}{c}{Accuracy} & \multicolumn{2}{c}{$\rho^\ast$} & \multicolumn{2}{c}{Eigengap} \\
 &  &  & Avg & Sd & Avg & Sd & Avg & Sd & Avg & Sd & Avg & Sd\\
\midrule
0 & 9 & FDA subspace & 16675 & 1124 & 9.2 & 0.7 & 0.82 & $< 0.01$ \\
0 & 9 & TR KSchur & 79031 & 4064 & 8.4 & 0.7 & 0.48 & \ph{$<$}0.01          & 13.51 & \ph{$<$}0.03     & $4.4 \cdot 10^{-5}$       & $1.4 \cdot 10^{-5}$     \\ \vspace{2mm}
0 & 9 & TR subspace & \ph{1}9124 & \ph{1}287 & 6.2 & 0.7 & 0.48 & \ph{$<$}0.01          & 13.51 & \ph{$<$}0.03     & $4.4 \cdot 10^{-5}$       & $1.4 \cdot 10^{-5}$     \\
0.1 & 9 & FDA subspace & \ph{11}163 & \ph{111}1 & 0.6 & 0.0 & 0.80 & $< 0.01$       \\
0.1 & 9 & TR KSchur & 34526 & 1527 & 4.0 & 0.5 & 0.59 & $< 0.01$    & \ph{1}5.14  & $< 0.01$ & $7.4 \cdot 10^{-6}$ & $2.9 \cdot 10^{-6}$     \\
0.1 & 9 & TR subspace & \ph{1}6917 & \ph{1}186 & 4.4 & 0.2 & 0.59 & $< 0.01$    & \ph{1}5.14  & $< 0.01$ & $7.4 \cdot 10^{-6}$ & $2.9 \cdot 10^{-6}$     \\
\bottomrule
\end{tabular}
\label{tab:fashion}
\end{table}

The method \trsub{} requires fewer matrix-vector products compared to \trks{}, but it is less competitive in terms of computational time when $\alpha = 0.1$. All three algorithms benefit from the regularization of $S_W$. This holds especially for \fdasub{}. Regularization also improves the accuracy of the TR problem, from $48\%$ to $59\%$. 

We now compare the behavior of \trks{}, \trsub{}, and \fdasub{} throughout the iterations. We consider one of the splits in the 10-fold cross-validation, and plot the number of matrix-vector products per iteration against the spectral norm of the residual matrix, which constitutes the stopping criterion. The two subspace methods are also run without restart, which corresponds to setting $\kmax = p = 784$. The idea is to check how well the restarting procedure behaves. 

Results are shown in Figure~\ref{fig:f-mnist}. As already suggested by Table~\ref{tab:fashion}, the behavior of FDA changes significantly when regularization is added, for both $\kmax = 5k$ and $\kmax = p$. The method does not show as many oscillations in $\|R\|$ as in the case $\alpha = 0$. A possible hint for the slow convergence might be found in the fact that, before regularization, $\lambda_p(S_W)\approx 10^{-7}$. When $\alpha = 0.1$, $\lambda_p((1-\alpha)S_W + \alpha I_p) \approx 10^{-2}$. An advantage of this regularization is that an $S_W$ with a larger minimal singular value avoids the problematic occurrence of almost singular pencils.
Regularization seems to have a smaller impact on the behavior of \trsub{} and \trks{}. In particular, \trsub{} with restart shows many oscillations in both cases.
\trks{} behaves differently since it performs fewer outer iterations, that require several inner matrix-vector products. As already observed, it converges in more matrix-vector products than \trsub{}.

Figure~\ref{fig:f-mnist-rho} shows the value of the trace ratio per number of matrix-vector products. Interestingly, \trsub{} gets close to the optimal trace ratio value faster than \trks{}. 

\begin{figure}[htb!]
\centering
\includegraphics[width=\textwidth]{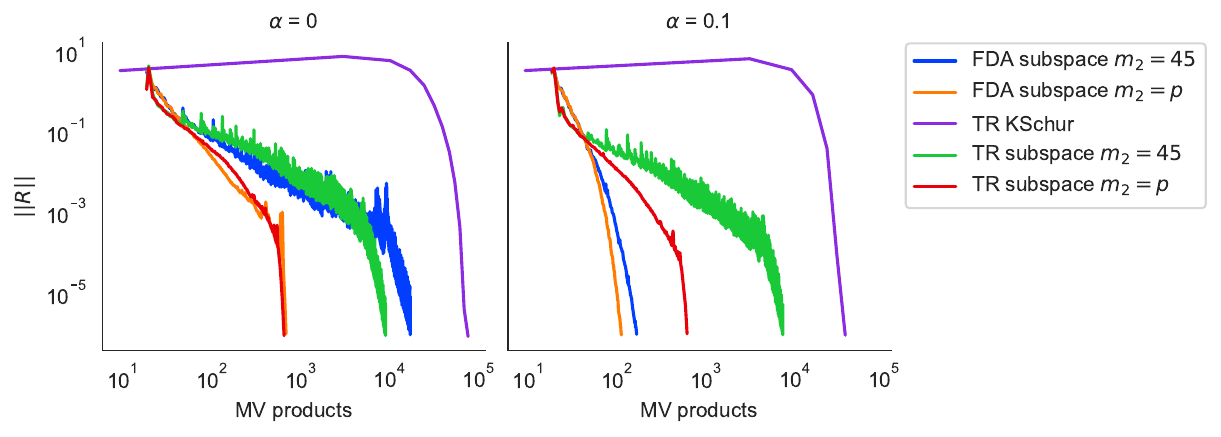}
\caption{FDA and TR for Fashion MNIST, solved by various methods. The number of matrix-vector (MV) products is plotted against the spectral norm of the residual matrix, $\|R\|$. The graphs are for different levels of regularization in $S_W$.}
\label{fig:f-mnist}
\end{figure}

\begin{figure}[htb!]
\centering
\includegraphics[width=\textwidth]{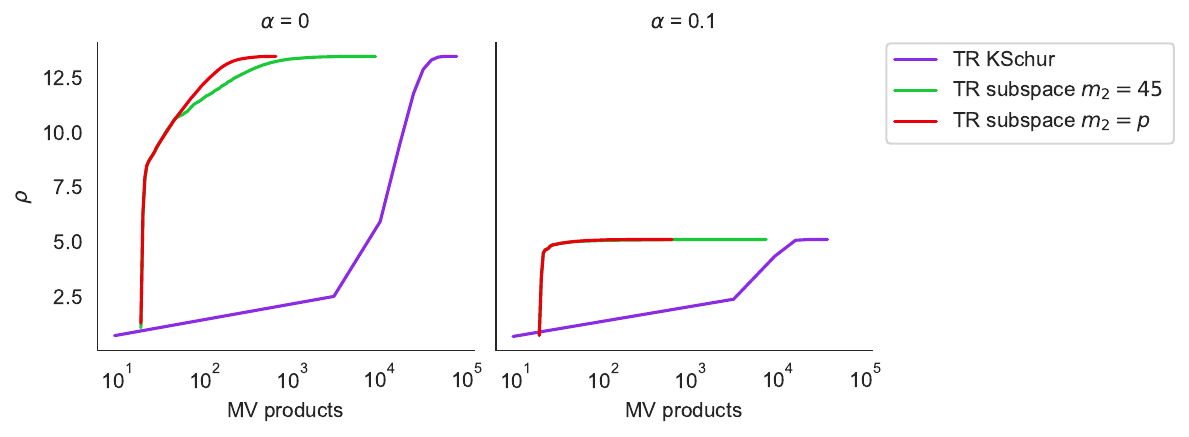}
\caption{Trace ratio value $\rho$ per number of matrix-vector (MV) products. The graphs are for different levels of regularization in $S_W$.}
\label{fig:f-mnist-rho}
\end{figure}

\paragraph{German Traffic Sign Recognition \cite{german2011}} The second dataset consists of $n = 39209$ traffic signs, described by $p = 6052$ features (HOG features, see \cite{german2011} for more details), and divided into $g = 43$ classes. The dataset is unbalanced, with $n_i\in[270, 3000]$. 
The classification accuracy is assessed via a 10-fold cross-validation, with the LDA classifier. 
For all the methods \trsub{}, \trks{}, and \fdasub{}, the size of the search subspace is between $\kmin = 2k$ and $\kmax = 5k$.  The large number of classes allows us to consider several reduced dimensions $k\in\{10,20,30\}$. Block versions of the methods are also tested, with block size $\bsize = 5$ (cf.~\cite{zhou2008block} for a block Krylov--Schur method). 
In this example, the regularization parameter is set to $\alpha = 0.1$. 

The results of the 10-fold cross-validation are shown in Table~\ref{tab:german}. For all $k$, FDA gives slightly better results in terms of accuracy. Increasing $k$ leads to an increase in the average number of matrix-vector products (and computational time) for all methods. \trsub{} and \fdasub{} methods with block size $\bsize = 5$ are on average faster than their counterparts with $\bsize = 1$ in terms of computational time; nevertheless \trsub{} with $\bsize = 5$ tends to require more matrix-vector products than \trsub{} with $\bsize = 1$. \trks{} with $\bsize = 5$ seems to be slower than \trks{} with $\bsize = 1$, and shows a larger variability compared to the other methods. Among the TR methods, \trsub{} with $\bsize=1$ is the one that requires the smallest average number of matrix-vector products for convergence; \trsub{} with $\bsize = 5$ is the least expensive in terms of computational time.

\begin{table}[ht!]
\centering
\caption{
Results of the 10-fold cross-validation on {\sf German Traffic Sign Recognition}. Average (Avg) and standard deviation (Sd) of matrix-vector (MV) products, time (in seconds, $s$), and accuracy are reported. For TR, we record the final trace ratio value $\rhopt$ and the corresponding eigengap $\lambda_k(A-\rho^\ast B) - \lambda_{k+1}(A-\rho^\ast B)$. Different reduced dimensions $k$ are considered. The regularization parameter is set to $\alpha = 0.1$.} 
\vspace{1mm}
\footnotesize
\begin{tabular}{rlrrrrrcrccc}
\toprule
$k$ & Method & \multicolumn{2}{c}{MV} & \multicolumn{2}{c}{Time ($s$)} & \multicolumn{2}{c}{Accuracy} & \multicolumn{2}{c}{$\rho^\ast$}& \multicolumn{2}{c}{Eigengap} \\
  &  & Avg & Sd & Avg & Sd & Avg & Sd & Avg & Sd & Avg & Sd \\
\midrule
10 & FDA subspace               & 224   & 1     & 14.2  & 0.5   & 0.83 & \ph{$<$}0.01  \\
10 & FDA subspace, $\bsize = 5$ & 230   & 2     & 12.1  & 0.5   & 0.83 & \ph{$<$}0.01  \\
10 & TR KSchur                  & 1016  & 13    & 21.7  & 0.4   & 0.74 & \ph{$<$}0.01     & 9.48 & \ph{$<$}0.01     & $6.1 \cdot 10^{-3}$ & $4.9 \cdot 10^{-4}$ \\
10 & TR KSchur, $\bsize = 5$    & 7612  & 5206  & 30.9  & 13.3  & 0.74 & \ph{$<$}0.01     & 9.48 & \ph{$<$}0.01     & $6.1 \cdot 10^{-3}$ & $4.9 \cdot 10^{-4}$ \\
10 & TR subspace                & 572   &  8    & 19.1  & 0.5   & 0.74 & \ph{$<$}0.01     & 9.48 & \ph{$<$}0.01     & $6.1 \cdot 10^{-3}$ & $4.9 \cdot 10^{-4}$ \\ \vspace{2mm}
10 & TR subspace, $\bsize = 5$  & 1169  & 73    & 15.7  & 0.6   & 0.74 & \ph{$<$}0.01     & 9.48 & \ph{$<$}0.01     & $6.1 \cdot 10^{-3}$ & $4.9 \cdot 10^{-4}$ \\

20 & FDA subspace               & 338   & 2     & 17.0  & 0.5   & 0.93 & \ph{$<$}0.01  \\
20 & FDA subspace, $\bsize = 5$ & 340   & 0     & 12.7  & 0.5   & 0.93 & \ph{$<$}0.01  \\
20 & TR KSchur                  & 2524  & 99    & 40.0  & 1.5   & 0.92 & $< 0.01$ & 6.12 & $< 0.01$ & $5.8 \cdot 10^{-4}$ & $6.8 \cdot 10^{-5}$ \\
20 & TR KSchur, $\bsize = 5$    & 9306  & 1711  & 37.0  & 5.3   & 0.92 & $< 0.01$ & 6.12 & $< 0.01$ & $5.8 \cdot 10^{-4}$ & $6.8 \cdot 10^{-5}$ \\
20 & TR subspace                & 1704  & 34    & 43.0  & 1.1   & 0.92 & $< 0.01$ & 6.12 & $< 0.01$ & $5.8 \cdot 10^{-4}$ & $6.8 \cdot 10^{-5}$ \\ \vspace{2mm}
20 & TR subspace, $\bsize = 5$  & 2990  & 237   & 24.6  & 1.1   & 0.92 & $< 0.01$ & 6.12 & $< 0.01$ & $5.8 \cdot 10^{-4}$ & $6.8 \cdot 10^{-5}$ \\

30 & FDA subspace               & 419   & 0     & 20.5  & 0.7   & 0.95 & $< 0.01$  \\
30 & FDA subspace, $\bsize = 5$ & 420   & 2     & 13.5  & 0.5   & 0.95 & $< 0.01$  \\
30 & TR KSchur                  & 6936  & 360   & 97.5  & 4.8   & 0.94 & $< 0.01$ & 4.63 & $< 0.01$ & $3.6 \cdot 10^{-5}$ & $4.3 \cdot 10^{-6}$ \\
30 & TR KSchur, $\bsize = 5$    & 35982 & 4405  & 114.2 & 13.2  & 0.94 & $< 0.01$ & 4.63 & $< 0.01$ & $3.6 \cdot 10^{-5}$ & $4.3 \cdot 10^{-6}$ \\
30 & TR subspace                & 4511  & 101   & 124.6 & 4.8   & 0.94 & $< 0.01$ & 4.63 & $< 0.01$ & $3.6 \cdot 10^{-5}$ & $4.3 \cdot 10^{-6}$ \\ 
30 & TR subspace, $\bsize = 5$  & 11176 & 1377  & 75.2  & 8.6   & 0.94 & $< 0.01$ & 4.63 & $< 0.01$ & $3.6 \cdot 10^{-5}$ & $4.3 \cdot 10^{-6}$ \\
\bottomrule
\end{tabular}
\label{tab:german}
\end{table}

In this problem, the slow convergence of TR is probably due to the small gap between the $k$th and the $(k+1)$st largest eigenvalues of the matrix $S_B - \rhopt S_W$ (cf.~the discussion in Section~\ref{sec:tr-overview}). 
Indeed, Table~\ref{tab:german} shows that the gap decreases as the reduced dimension increases.

When the gap is numerically zero and uniqueness is lost, there is no guarantee that different solutions will provide the same classification results. Therefore, one might consider monitoring $\lambda_k(U^T(A-\rho B)U) - \lambda_{k+1}(U^T(A-\rho B)U)$ as an approximate value for $\lambda_k(A-\rhopt B) - \lambda_{k+1}(A-\rhopt B)$, where $U$ is the output search subspace of Algorithm~\ref{algo:traceratio-subspace}. If this gap is not satisfactory, the user may choose a different reduced dimension to run TR.

Among the compared methods, \fdasub{} tends to be the most economic, as it considers a fixed GEP. It does have to solve small GEPs in each step, which, depending on the context, may be relatively expensive.
In contrast, TR solves (cheaper) standard eigenvalue problems, but with the trace ratio value changing over the iterations. 

The main cost in \trks{} lies in matrix-vector products, followed by the corresponding re-orthogonalization steps.
While \trsub{} usually requires fewer MVs, it involves relatively expensive projected TR problems, the computation of the residual matrix and its dominant singular value(s) and vector(s); that is why it is sometimes slower than \trks{} in terms of computational time. All methods will generally be computationally favorable for large-scale problems, since the projected problems and the other operations would be relatively inexpensive compared to MVs.

Finally, TR leads to a different low-dimensional space for the classification than FDA. This may correspond to better classification results for TR in some cases (cf., e.g., \cite{ngo2012trace}), although this was not observed in our experiments.

\section{Conclusions} \label{sec:trsub-concl}
We have introduced a Davidson type subspace method for large-scale trace ratio problems. The details about the algorithm are covered in Section~\ref{sec:tr-davidson}. In the extraction phase, a projected trace ratio problem is solved through the conventional Newton type routine for TR, as outlined in Algorithm~\ref{algo:trsub-traceratio}. The information gathered within the residual matrix has been used to expand the search subspace. The restart strategy is designed to maintain a non-decreasing trace ratio value throughout the iterations. 

Section~\ref{sec:trsub-analysis} analyzes the behavior of the approximate solution to TR. As the angle between the search subspace and the solution to TR approaches zero, there is a subset of approximate eigenvalues approaching the largest eigenvalues of $A - \rhopt B$. This stems from the trace ratio value converging towards $\rhopt$ as the accuracy of the search subspace improves. If the largest eigenvalues of $A - \rhopt B$ are uniformly separated from some unwanted approximate eigenvalues, the angle between the approximate solution and the solution to TR also approaches zero. 

Additionally, we have shown that this angle is bounded by the spectral norm of the residual matrix, augmented by a term that depends on the gap between $\rhopt$ and the current approximate trace ratio value. In typical situations, we expect this quantity to be bounded quadratically by the norm of the residual matrix. To some extent, this fact justifies the use of the spectral norm of the residual matrix in the computation of the stopping criterion.

The trace ratio problem finds common application in multigroup classification, revisited in Section~\ref{sec:classification} along with an implementation of a Davidson type method for FDA. 

Section~\ref{sec:trsub-exp} presents a numerical analysis of subspace methods for both the trace ratio problem (\trsub{}) and FDA (\fdasub{}), along with the Newton-type algorithm for TR (\trks{}). This comparison involves both synthetic and real datasets. It seems that \trsub{} requires fewer matrix-vector products than \trks{} to converge to the TR maximizer. The difference is even more pronounced in the convergence of the trace ratio value towards $\rhopt$. In the context of real datasets, and in contrast to \fdasub{}, the norm of the residual matrix exhibits more oscillations in \trsub{}. This may be partly caused by a relatively narrow gap between the $k$th and $(k+1)$st eigenvalues of $A-\rhopt B$. While the regularization technique discussed in Section~\ref{sec:trsub-reg} has proven effective in improving the convergence speed of FDA, finding an appropriate regularization strategy for TR might be challenging, as the width of the gap itself depends on the optimal trace ratio value.

An implementation of \trks{}, \trsub{}, and \fdasub{} is available in Matlab at \href{https://github.com/gferrandi/traceratio-matlab}{github.com/gferrandi/traceratio-matlab}.

{\bf Acknowledgments:}  
This work has been supported by the European Unions Horizon 2020 program under the Marie Sklodowska-Curie Grant Agreement no.~812912.
It has also received support from Fundação para a Ciência e Tecnologia, 
Portugal, through the project UIDB/04621/2020, with DOI \href{https://doi.org/10.54499/UIDB/04621/2020}{10.54499/UIDB/04621/2020}. 

\bibliographystyle{plain}
\bibliography{tidy-references}

\end{document}